\newtheorem{defi}{Definition}[section]
\newtheorem{theorem}[defi]{Theorem}
\newtheorem{lemma}[defi]{Lemma}
\newtheorem{proposition}[defi]{Proposition}
\begin{document}
\vskip 1cm
\begin{center}
\vskip 0.1cm
{\bf\huge Existence of bounded variation solutions for a 1-Laplacian problem with vanishing potentials}
\end{center}
\vskip 0.3cm
\begin{center}

{\sc Giovany M. Figueiredo$^1$ and Marcos T. O. Pimenta$^2$,}
\\
\vspace{0.5cm}

1. Universidade Federal do Par\'a, Faculdade de Matem\'{a}tica-ICEN,\\
66075-110 - Bel\'em - PA , Brazil, \\

2. Departamento de Matem\'atica e Computa\c{c}\~ao\\ Fac. de Ci\^encias e Tecnologia, Universidade Estadual Paulista - UNESP \\
19060-900 - Presidente Prudente - SP, Brazil, \\

\medskip

e-mail addresses: giovany@ufpa.br, pimenta@fct.unesp.br (corresponding author),
 \end{center}
\vskip 1cm

\begin{abstract}

In this work it is studied a quasilinear elliptic problem in the whole space $\mathbb{R}^N$ involving the $1-$Laplacian operator, with potentials which can vanish at infinity. The Euler-Lagrange functional is defined in a space whose definition resembles $BV(\mathbb{R}^N)$ and, in order to avoid working with extensions of it to some Lebesgue space, we state and prove a version of the Mountain Pass Theorem without the Palais-Smale condition to Lipschitz continuous functionals.

\end{abstract}

\vskip 1cm

\noindent{{\bf Key Words}. 1-Laplacian, mountain pass theorem, bounded variation functions.}
\newline
\noindent{{\bf AMS Classification.} 35J62, 35J20.} \vskip 0.4cm

\section{Introduction and some abstract results}

\hspace{.5cm} 

In general, whenever dealing with semilinear or quasilinear elliptic problems in $\mathbb{R}^N$, it is explored the reflexivity of the Sobolev spaces $W^{m,p}(\mathbb{R}^N)$, for $1 < p < +\infty$. In fact, the weak limits of sequences, which can be minimizing, Palais-Smale, and so on, are the candidates to be weak solutions of the problems.

When dealing with problems which are modeled in the space of functions of bounded variation, $BV(\mathbb{R}^N)$, the situation is different. Indeed, the dual of $BV(\mathbb{R}^N)$ is not well known yet and because of that, it is not known whether or not the space $BV(\mathbb{R}^N)$ is reflexive. This becomes a very difficult task to find critical points of functionals defined in this space and, as a consequence, we can see few or even any work dealing with elliptic problems in $\mathbb{R}^N$ which are normally modeled in this space.

In this work we are interested in the following quasilinear problem
\begin{equation}
\left\{
\begin{array}{rl}
\displaystyle - \Delta_1 u + V(x)\frac{u}{|u|} & = K(x)f(u) \quad \mbox{in
$\mathbb{R}^N$,}\\
u(x) \to 0 & \mbox{as $|x| \to +\infty.$}
\end{array} \right.
\label{Pintro}
\end{equation}
where the differential operator $1-$Laplacian is formally defined as $\displaystyle \Delta_1 u = \mbox{div}\left(\frac{\nabla u}{|\nabla u|}\right)$. The nonlinearity $f$ is assumed to satisfy the following set of assumptions:
\begin{itemize}
\item [$(f_1)$] $f \in C^0(\mathbb{R})$;
\item [$(f_2)$] $f(s) = o(|s|)$ as $s \to 0$;
\item [$(f_3)$] $f$ has a {\it quasicritical growth}, i.e., 
$$\displaystyle \limsup_{s \to +\infty}\frac{f(s)}{s^{{1^*}-1}} = 0,$$
where $\displaystyle 1^*:= \frac{N}{N-1}$;
\item [$(f_4)$] There exists $\theta > 1 $ such that $$0 < \theta F(s) \leq f(s)s, \quad \mbox{for $s \neq 0$},$$
where $F(s) = \int_0^s f(t)dt$;
\item [$(f_5)$] $f$ is increasing in $\mathbb{R}$.
\end{itemize}

The potentials $V$ and $K$ are assumed to satisfy some assumptions. We say that $(V, K) \in \mathcal{K}$ if the following conditions hold:
\begin{description}
\item[$(VK_0)$] $V(x), K(x) > 0$ for all $x \in \mathbb R^N$, $K \in C^0(\mathbb{R}^N) \cap L^{\infty}(\mathbb R^N)$;
\item[$(VK_1)$]   If $(A_n) \subset \mathbb R^N$ is a sequence of Borel sets such that its Lebesgue measure $|A_n| \leq R$, for all $n \in \mathbb N$ and some $R > 0$, then
$$\lim_{r \rightarrow + \infty} \int_{A_n \cap B_r^c(0)} K(x) =0, \quad\hbox{uniformly in $n \in \mathbb N$}.  $$
\end{description}
Furthermore, one of the below conditions occurs
\begin{description}
\item[$(VK_2)$]   $\displaystyle  \frac{K}{V} \in L^{\infty}(\mathbb R^N)$
\end{description}
or
\begin{description}
\item[$(VK_3)$] there exists $q \in (1, 1^*)$  such that
$$  \frac{K(x)}{V(x)^\frac{1^*-q}{1^*-1}} \rightarrow 0 \quad\hbox{as $|x| \rightarrow + \infty$}. $$
\end{description}

This kind of assumptions in the potentials has been considered for the first time in \cite{AlvesSouto}, where the authors succeed in showing the existence of ground-state solutions to semilinear problems by considering vanishing potentials.

Since $\displaystyle \inf_{\mathbb{R}^N}V$ can be equal to $0$, it is not possible to work with $BV(\mathbb{R}^N)$ itself endowed with any norm including $V$. To overcome this difficulty we have to work with a space that plays the same role that $D^{1,2}(\mathbb{R}^N)$, when working with semilinear problems. This space is defined by
$$
E = \left\{u \in L^{1^*}(\mathbb{R}^N); \, \int_{\mathbb{R}^N}|Du| < \infty \right\},
$$
and it seems that this is the first work in which this space is defined and studied. Because of that, for the sake of completeness in Section 2 we are prove the properties that this space in fact have, like its completeness, etc.

By using a variational approach, in this work we have to deal with an Euler-Lagrange functional which is not smooth, although locally Lipschitz. Then the way in which the functional and its Euler-Lagrange equation is linked is somehow tricky. In fact the sense of solution we consider here has to take into account the concept of generalized gradient developed by Clarke (see \cite{Clarke,Chang}). More precisely, the Euler-Lagrange functional associated with (\ref{Pintro}) is given by
$$
\Phi(u) = \int_{\mathbb{R}^N}|Du| + \int_{\mathbb{R}^N}V(x)|u|dx - \int_{\mathbb{R}^N}K(x)F(u),
$$
where $Du$ is the distributional derivative of $u$, which in turn is a Radon measure. As can be seen in Section 2 we say that $u$ is a bounded variation solution of (\ref{Pintro}) if
$$
\mathcal{J}(v) - \mathcal{J}(u) \geq \int_{\mathbb{R}^N}K(x)f(u)(v-u) dx,
$$
for all $v$ in the domain of $\Phi$, where
$$
\mathcal{J}(u) = \int_{\mathbb{R}^N}|Du| + \int_{\mathbb{R}^N}V(x)|u|dx.
$$

The main result of our paper, which seems to be together with \cite{PimentaFigueiredo1} the very first works dealing with the $1-$Laplacian operator in $\mathbb{R}^N$, is the following result stating the existence of a ground-state bounded variation solution of (\ref{Pintro}).

\begin{theorem}
Suppose that $f$ satisfies  the conditions $(f_1) - (f_5)$ and $(V,K) \in \mathcal{K}$. Then there exists a ground-state bounded variation solution $u$ of (\ref{Pintro}). Moreover, $u$ satisfies the following Euler-Lagrange equation (which is the precise version of (\ref{Pintro}))
\begin{equation}
\left\{
\begin{array}{l}
\exists z \in L^\infty(\mathbb{R}^N,\mathbb{R}^N), \, \, \|z\|_\infty \leq 1,\, \,  \mbox{div}z \in L^N(\mathbb{R}^N), \, \, -\int_{\mathbb{R}^N}u \mbox{div}z dx = \int_{\mathbb{R}^N}|Du|,\\
\exists z_2^* \in L^N(\mathbb{R}^N),\, \, z_2^*|u| = V(x) u \quad \mbox{a.e. in $\mathbb{R}^N$},\\
-\mbox{div} z + z_2^* = f(u), \quad \mbox{a.e. in $\mathbb{R}^N$}.
\end{array}
\right.
\label{eulerlagrangeintro}
\end{equation}
\label{theoremapplication}
\end{theorem}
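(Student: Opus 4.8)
The plan is to realise the ground state as a critical point of $\Phi$ through a nonsmooth mountain pass argument, and then to decode the abstract criticality condition into the system (\ref{eulerlagrangeintro}) by means of the structure of Clarke's subdifferential together with the Anzellotti pairing between bounded divergence--measure fields and $BV$ functions. Decompose $\Phi=\mathcal{J}-\Psi$, where $\mathcal{J}(u)=\int_{\mathbb{R}^N}|Du|+\int_{\mathbb{R}^N}V(x)|u|\,dx$ is convex and continuous on $E$ and $\Psi(u)=\int_{\mathbb{R}^N}K(x)F(u)$ is of class $C^1$ with $\Psi'(u)=K(x)f(u)$; in particular $\Phi$ is locally Lipschitz. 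The mountain pass geometry is then routine: from $(f_2)$--$(f_3)$ one has $F(s)\le\varepsilon|s|+C_\varepsilon|s|^{1^*}$, so the continuous embedding $E\hookrightarrow L^{1^*}(\mathbb{R}^N)$ proved in Section~2 and the Sobolev inequality $\|u\|_{1^*}\le C\int_{\mathbb{R}^N}|Du|$ yield $\alpha,\rho>0$ with $\Phi(u)\ge\alpha$ whenever $\|u\|_E=\rho$; the Ambrosetti--Rabinowitz condition $(f_4)$ gives $F(s)\ge c_1|s|^\theta-c_2$, whence $\Phi(tu_0)\to-\infty$ for any fixed $0\le u_0\in E\setminus\{0\}$. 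This defines a positive minimax level $c$.

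Next I would apply the version of the Mountain Pass Theorem for locally Lipschitz functionals announced in the abstract to produce a sequence $(u_n)\subset E$ with $\Phi(u_n)\to c$ and $\min\{\|\xi\|_{E^*}:\xi\in\partial\Phi(u_n)\}\to0$. To see that $(u_n)$ is bounded, pick $\xi_n=\eta_n-\Psi'(u_n)\in\partial\Phi(u_n)$ with $\eta_n\in\partial\mathcal{J}(u_n)$; since $\mathcal{J}$ is positively $1$-homogeneous one has $\langle\eta_n,u_n\rangle=\mathcal{J}(u_n)$, so that
\[
\Big(1-\tfrac1\theta\Big)\mathcal{J}(u_n)\le\Phi(u_n)-\tfrac1\theta\langle\xi_n,u_n\rangle+\int_{\mathbb{R}^N}K\Big(\tfrac1\theta f(u_n)u_n-F(u_n)\Big)\le c+o(1)+o(1)\|u_n\|_E,
\]
using $(f_4)$ in the last integral. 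As $\mathcal{J}$ dominates $\int_{\mathbb{R}^N}|Du|$, which is an equivalent norm on $E$, this forces $\sup_n\|u_n\|_E<\infty$.

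By boundedness, along a subsequence $u_n\rightharpoonup u$ weakly-$*$ in the $BV$ sense, so $Du_n\rightharpoonup Du$ as measures and $u_n\to u$ in $L^1_{loc}(\mathbb{R}^N)$. The key role of the Alves--Souto hypotheses $(VK_0)$--$(VK_1)$ together with $(VK_2)$ or $(VK_3)$ is to provide a \emph{compact} embedding of $E$ into a weighted space $L^q_K(\mathbb{R}^N)$; combined with the quasicritical growth $(f_3)$ this lets one pass to the limit $\int_{\mathbb{R}^N}K f(u_n)\varphi\to\int_{\mathbb{R}^N}Kf(u)\varphi$ in the nonlinear term. Using the weak-$*$ lower semicontinuity of the convex functional $\mathcal{J}$, I would then verify $0\in\partial\Phi(u)$; since $\mathcal{J}$ is convex this is precisely the variational inequality $\mathcal{J}(v)-\mathcal{J}(u)\ge\int_{\mathbb{R}^N}K(x)f(u)(v-u)\,dx$ that defines a bounded variation solution.

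The delicate step, and the one I expect to be the main obstacle, is converting this inequality into the pointwise system (\ref{eulerlagrangeintro}). Inserting $v=u\pm t\varphi$ and letting $t\to0$ shows that $\varphi\mapsto\int_{\mathbb{R}^N}K f(u)\varphi$ belongs to $\partial\mathcal{J}(u)=\partial\big(\int|Du|\big)(u)+\partial\big(\int V|u|\big)(u)$. The subdifferential of the total variation is characterised, via Anzellotti's theory of pairings, by a field $z\in L^\infty(\mathbb{R}^N,\mathbb{R}^N)$ with $\|z\|_\infty\le1$, $\mbox{div}\,z\in L^N(\mathbb{R}^N)$ and $-\int_{\mathbb{R}^N}u\,\mbox{div}\,z\,dx=\int_{\mathbb{R}^N}|Du|$, while the subdifferential of $\int V|u|$ yields $z_2^*\in L^N(\mathbb{R}^N)$ with $z_2^*|u|=V(x)u$ a.e.; adding the two recovers the identity $-\mbox{div}\,z+z_2^*=K(x)f(u)$, i.e.\ the last line of (\ref{eulerlagrangeintro}). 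Finally, that $u$ is a \emph{ground state} will follow by comparing $c$ with the infimum of $\Phi$ over nontrivial solutions: the monotonicity $(f_5)$ ensures that for each nontrivial $u$ the fibering map $t\mapsto\Phi(tu)$ has a single maximum point, so that $c$ coincides with the least energy among nontrivial solutions, which $u$ attains.
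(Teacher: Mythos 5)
Your proposal follows essentially the same road map as the paper (same splitting $\Phi=\mathcal{J}-\mathcal{F}$, the nonsmooth mountain pass theorem without Palais--Smale, boundedness via $(f_4)$ together with the $1$-homogeneity of $\mathcal{J}$ --- your identity $\langle\eta_n,u_n\rangle=\mathcal{J}(u_n)$ is the functional-analytic form of the paper's test $v=2u_n$ in the Ekeland-type inequality, the compact embedding $E_V\hookrightarrow L^q_K(\mathbb{R}^N)$, and the Kawohl--Schuricht/Anzellotti decoding of the subdifferential into the system (\ref{eulerlagrangeintro})). However, there is a genuine gap at the end: you never prove that the limit $u$ is nontrivial, nor that $\Phi(u)=c$, yet your last sentence asserts that $u$ attains the least energy. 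In this nonreflexive setting with vanishing potentials this is precisely the crux of the matter: the limit of the mountain pass sequence could a priori be $u=0$, which satisfies the variational inequality trivially, and lower semicontinuity of $\mathcal{J}$ only yields $\Phi(u)\leq c$. The paper closes this hole by testing the approximate inequality (\ref{mountainpasssequence1}) with $v=u_n\pm tu_n$ and letting $t\to 0^{\pm}$, which combined with $\mathcal{J}'(u_n)u_n=\mathcal{J}(u_n)$ gives $\mathcal{J}(u_n)=\int_{\mathbb{R}^N}K(x)f(u_n)u_n\,dx+o_n(1)$; the analogous computation at the limit gives $\mathcal{J}(u)=\int_{\mathbb{R}^N}K(x)f(u)u\,dx$ (these are (\ref{A2}) and (\ref{A1})); Proposition \ref{propositioncompactnessf} then forces $\mathcal{J}(u_n)\to\mathcal{J}(u)$, hence $\Phi(u)=c\geq\alpha>0$ and $u\neq 0$. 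Your fibering-map remark based on $(f_5)$ only identifies $c$ with $\inf_{\mathcal{N}}\Phi$; it is not a substitute for the energy convergence $\Phi(u)=c$, which is where the compactness hypotheses on $(V,K)$ actually earn their keep.

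A smaller slip: in the unbounded-direction estimate you take an arbitrary $0\leq u_0\in E\setminus\{0\}$, but the bound $F(s)\geq c_1|s|^{\theta}-c_2$ produces the terms $c_1 t^{\theta}\int_{\mathbb{R}^N}K|u_0|^{\theta}dx$ and $c_2\int_{\mbox{\scriptsize supp}(u_0)}K\,dx$; for a general $u_0\in E_V$ one has neither $u_0\in L^{\theta}$ (note $\theta>1$ may be small, and membership in $L^{1^*}$ with $\int V|u_0|<\infty$ does not control $|u_0|_{\theta}$ on all of $\mathbb{R}^N$) nor finiteness of the second integral. The paper avoids this by taking $u_0$ with compact support, for which both quantities are finite; you should do the same.
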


A main point in proving Theorem \ref{theoremapplication} is a sort of compactness result. In fact we take benefit from the conditions in $V$ and $K$ and prove the compactness of the embedding of a subspace of $\mathbb{R}^N$ in some weighted Sobolev spaces (see Proposition \ref{propositioncompactness}).

It is worth to highlight again some of the peculiarities of working with functionals in the space of bounded variation functions in $\mathbb{R}^N$ or even in smooth bounded domains. In general, when working with functionals in usual Sobolev spaces, compactness of embeddings is symnonimous of the validity of the Palais Smale condition. However, when dealing with a nonreflexive space like $E$ or $BV(\Omega)$, just the compactness of the embeddings is not enough to ensure the strong convergence of the Palais Smale sequences. This happens since in a nonreflexive setting, the convergence of the norm of a Palais Smale sequence, does not imply the strong convergence of the sequence itself. This clearly requires the application of versions of minimax theorems without the Palais Smale condition among their assumptions, even in problems in bounded domains. This is in strike contrast with the regular case and also is why in general, when dealing with problems involving $1-$Laplacian or mean-curvature operators in $BV(\Omega)$, the standard technique that has been applied is to extend the Euler-Lagrange functional to some Lebesgue space, gaining reflexivity of the domain (and Palais-Smale condition as a consequence), but loosing continuity of the functional. Because of these difficulties we present in an Appendix a version of the Mountain Pass Theorem to locally Lipschitz functionals and also of the Deformation Lemma, both in the absense of the Palais-Smale condition, since we could not find them in the literature.

A brief explanation of some points about the highly singular nature of the operator $\Delta_1$ is in order. The $1-$Laplacian operator can be seen as the formal limit of the usual $p-$Laplacian operator, as $p \to 1^+$ (see \cite{Demengel} for instance) and have several applications like in image restoration (see \cite{Chen}), game theory (see \cite{Evans}), etc. If we get a closer look at the definition of the $1-$Laplacian operator, we clearly see that it is not well defined wherever $\nabla u$ vanishes. This becomes so much relevant, since in general solutions of problems involving the $\Delta_1$ operator vanishes in positive measure sets, like its eigenfunctions in bounded sets, as shown in \cite{Milbers}, \cite{Kawohl} and \cite{Chang2}.

As far as the approach one can follow in studying $1-$Laplacian problems in bounded domains, roughly speaking there are two ways to follow. One can study $\Delta_1$ through $p-$Laplacian problems and then taking the limit as $p \to 1^+$, like in \cite{Demengel,Demengel1}, or one can directly deal with $\Delta_1$ itself, by using variational methods for instance. However, to precisely understand (\ref{Pintro}), we have to replace the expression $\displaystyle \frac{\nabla u}{|\nabla u|}$ by a well defined vector field which extend the former wherever $\nabla u$ vanishes. Similarly, one has to substitute $\displaystyle\frac{u}{|u|}$ by some function to give meaning to this expression wherever $u$ vanishes. In Section 2 we pay more attention to this issue, showing that (\ref{eulerlagrangeintro}) is the precise Euler-Lagrange equation satisfied by the bounded variation critical points of the energy functional $\Phi$.

A version of Br\'ezis-Nirenberg problem to $1-$Laplacian operator has been studied in \cite{DegiovanniMagrone}, where the authors use a nonstandard linking structure in order to get solutions of the problem. In \cite{LeonWebler} the authors study a parabolic problem involving the $1-$Laplacian operator and succeed in proving global existence and uniqueness for source and initial data in some adequate space. In \cite{FigueiredoPimenta} the authors seem to be pioneers in using Nehari types arguments in order to get bounded variation solutions for problems involving the mean-curvature or the $1-$Laplacian operators. In \cite{PimentaFigueiredo1}, the authors proved versions of several classical results in a $BV$ setting, like Strauss Radial Lemma, the compactness of the embedding $BV_{rad}(\mathbb{R}^N)$ in $L^q(\mathbb{R}^N)$ for $q \in (1,1^*)$ and also a version of Lions' Lemma. An application of these results to find a ground-state solution of a $1-$Laplacian problem in $\mathbb{R}^N$ is also presented in that paper.

The paper is organized as follows. In Section 2 we present the variational framework, study the precise Euler-Lagrange equation associated to $\Phi$ and prove a crucial compactness result. In Section 3 we prove the result about the existence of a bounded variation solution of (\ref{Pintro}). In the Appendix we prove the version of the Mountain Pass Theorem and of the Deformation Lemma we are using here.

\section{Variational framework and the Euler-Lagrange equation}

\subsection{The space and the energy functional}

\hspace{.5cm}  The space we deal in this work is the space of functions in $L^{1^*}(\mathbb{R}^N)$, such that its distributional derivative $Du$ is a vectorial Radon measure, i.e.,
$$
E = \left\{ u \in L^{1^*}(\mathbb{R}^N); \, Du \in \mathcal{M}(\mathbb{R}^N,\mathbb{R}^N) \right\},
$$
where $\mathcal{M}(\mathbb{R}^N,\mathbb{R}^N)$ is the space of the vectorial Radon measures. First, let us consider the following norm in $E$, 
$$
\|u\|_0:= \int_{\mathbb{R}^N}|Du| + |u|_{1^*}.
$$

It can be proved that $u \in E$ is equivalent to $u \in L^{1^*}(\mathbb{R}^N)$ and also
$$\int_{\mathbb{R}^N} |Du| := \sup\left\{\int_{\mathbb{R}^N} u \mbox{div}\phi dx; \, \, \phi \in C^1_c(\mathbb{R}^N,\mathbb{R}^N), \, \mbox{s.t.} \, \, |\phi|_\infty \leq 1\right\} < +\infty.$$

This space seems to be related with the space of functions of bounded variation of $\mathbb{R}^N$, $BV(\mathbb{R}^N)$, in the same way that $D^{1,2}(\mathbb{R}^N)$ is related with $H^1(\mathbb{R}^N)$. Since we could not even find the definition of this space in the literature, below we prove some of its properties. In fact they are inspired in some results in \cite{Buttazzo}[Chapter 10] and we just include them here for the sake of completeness.

\begin{lemma}
Let $(u_n) \in E$ be a bounded sequence such that $u_n \to u$ in $L^{1^*}(\mathbb{R}^N)$. Then it follows that $u \in E$ and
$$\int_{\mathbb{R}^N} |Du| \leq \liminf_{n \to \infty} \int_{\mathbb{R}^N} |Du_n|.$$
\label{semicontinuity}
\end{lemma}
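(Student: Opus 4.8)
The plan is to exploit the dual characterization of the total variation recalled just above the statement, namely
$$\int_{\mathbb{R}^N}|Du| = \sup\left\{\int_{\mathbb{R}^N} u\,\mbox{div}\,\phi\, dx : \phi \in C^1_c(\mathbb{R}^N,\mathbb{R}^N),\ |\phi|_\infty \leq 1\right\},$$
which writes $\int|Du|$ as a supremum of continuous linear functionals of $u$. Since a supremum of lower semicontinuous functions is lower semicontinuous, this is precisely the structure I would turn into the claimed inequality; the whole proof is the $E$-analogue of the classical lower semicontinuity of the total variation in $BV$.

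First I would fix an arbitrary admissible test field $\phi \in C^1_c(\mathbb{R}^N,\mathbb{R}^N)$ with $|\phi|_\infty \leq 1$ and analyze $\int_{\mathbb{R}^N} u_n\,\mbox{div}\,\phi\, dx$ as $n \to \infty$. The key observation is that $\mbox{div}\,\phi$ is continuous with compact support, hence $\mbox{div}\,\phi \in L^N(\mathbb{R}^N)$, which is exactly the conjugate exponent of $1^*$ since $\frac{1}{1^*} + \frac{1}{N} = 1$. Because $u_n \to u$ in $L^{1^*}(\mathbb{R}^N)$, Hölder's inequality gives
$$\left|\int_{\mathbb{R}^N}(u_n - u)\,\mbox{div}\,\phi\, dx\right| \leq |u_n - u|_{1^*}\,|\mbox{div}\,\phi|_N \longrightarrow 0,$$
so that $\int_{\mathbb{R}^N} u_n\,\mbox{div}\,\phi\,dx \to \int_{\mathbb{R}^N} u\,\mbox{div}\,\phi\,dx$.

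Next I would observe that, for each $n$, the field $\phi$ is admissible in the dual characterization of $\int|Du_n|$, so $\int_{\mathbb{R}^N} u_n\,\mbox{div}\,\phi\,dx \leq \int_{\mathbb{R}^N}|Du_n|$. Passing to the $\liminf$ and using the convergence just established yields
$$\int_{\mathbb{R}^N} u\,\mbox{div}\,\phi\,dx = \lim_{n\to\infty}\int_{\mathbb{R}^N} u_n\,\mbox{div}\,\phi\,dx \leq \liminf_{n\to\infty}\int_{\mathbb{R}^N}|Du_n|.$$
Taking the supremum over all admissible $\phi$ on the left-hand side then gives $\int_{\mathbb{R}^N}|Du| \leq \liminf_{n\to\infty} \int_{\mathbb{R}^N}|Du_n|$. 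Finally, the boundedness of $(u_n)$ in $E$ makes the right-hand side finite, so $\int_{\mathbb{R}^N}|Du| < \infty$; combined with $u \in L^{1^*}(\mathbb{R}^N)$ (being the $L^{1^*}$-limit of the $u_n$), this shows $u \in E$ and closes the argument.

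As for difficulty, I do not expect any serious obstacle here: the only point requiring a moment's care is the limit passage in the test functional, which reduces to noting that $\mbox{div}\,\phi$ lies in the predual $L^N(\mathbb{R}^N)$ of $L^{1^*}(\mathbb{R}^N)$. The guiding principle, that a supremum of $L^{1^*}$-continuous linear functionals is $L^{1^*}$-lower semicontinuous, is what drives the whole proof.
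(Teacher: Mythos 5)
Your proof is correct and follows essentially the same route as the paper: both rest on the dual characterization of $\int_{\mathbb{R}^N}|Du|$, pass to the limit in the pairing $\int u_n\,\mathrm{div}\,\phi\,dx$, bound it by $\liminf_n \int_{\mathbb{R}^N}|Du_n|$, and take the supremum over admissible $\phi$. The only cosmetic difference is how the limit passage is justified: you use H\"older's inequality with $\mathrm{div}\,\phi \in L^N(\mathbb{R}^N)$, the conjugate exponent of $1^*$, while the paper restricts to the compact set $\mathrm{supp}(\phi)$ and uses that $L^{1^*}$-convergence there implies $L^1$-convergence; both are equally valid.
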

\begin{proof}
Let $\phi \in C^1_c(\mathbb{R}^N,\mathbb{R}^N)$ such that $|\phi|_\infty \leq 1$ and note that
$$\int_{\mathbb{R}^N}u \mbox{div}\phi dx = \int_{supp(\phi)}u \mbox{div}\phi dx = \lim_{n \to +\infty}\int_{supp(\phi)}u_n \mbox{div}\phi dx \leq \liminf_{n \to \infty}\int_{\mathbb{R}^N}|Du_n|,$$
where the second equality follows since $u_n \to u$ in $L^{1^*}(supp(\phi))$ implies that $u_n \to u$ in $L^1(supp(\phi))$. Then the result follows by taking the supremum over all such $\phi$.
\end{proof}

Now let us prove that $(E, \|\cdot\|_0)$ is a Banach space.
\begin{proposition}
$(E, \|\cdot\|_0)$ is a Banach space
\end{proposition}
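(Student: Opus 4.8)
The plan is to first confirm that $\|\cdot\|_0$ is genuinely a norm on a vector space, and then to establish completeness, the latter being the substantial part. That $E$ is a vector space and that $\|\cdot\|_0$ is a norm both reduce to homogeneity and subadditivity of the map $u \mapsto \int_{\mathbb{R}^N}|Du|$, which are transparent from its representation as a supremum of the linear functionals $u \mapsto \int_{\mathbb{R}^N} u\, \mbox{div}\phi\, dx$ over admissible $\phi \in C^1_c(\mathbb{R}^N,\mathbb{R}^N)$ with $|\phi|_\infty \leq 1$; combined with the triangle inequality in $L^{1^*}(\mathbb{R}^N)$ this gives the triangle inequality for $\|\cdot\|_0$, while $\|u\|_0 = 0$ forces $|u|_{1^*} = 0$ and hence $u = 0$. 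These verifications are routine and I would dispatch them quickly.

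For completeness, let $(u_n) \subset E$ be Cauchy for $\|\cdot\|_0$. Since $|u_n - u_m|_{1^*} \leq \|u_n - u_m\|_0$, the sequence is Cauchy in $L^{1^*}(\mathbb{R}^N)$; as $1 < 1^* < \infty$ this Lebesgue space is complete, so there is $u \in L^{1^*}(\mathbb{R}^N)$ with $u_n \to u$ in $L^{1^*}(\mathbb{R}^N)$. A Cauchy sequence is bounded, so $\sup_n \int_{\mathbb{R}^N}|Du_n| < \infty$, and Lemma \ref{semicontinuity} then guarantees $u \in E$ with $\int_{\mathbb{R}^N}|Du| \leq \liminf_n \int_{\mathbb{R}^N}|Du_n|$. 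What remains, and what constitutes the real content, is to upgrade this $L^{1^*}$-convergence to convergence in $\|\cdot\|_0$, that is, to show $\int_{\mathbb{R}^N}|D(u_n - u)| \to 0$.

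This last point is the crux, and the device is to feed the \emph{differences} into the semicontinuity lemma. Fix $\varepsilon > 0$ and choose $n_0$ so that $\int_{\mathbb{R}^N}|D(u_n - u_m)| \leq \|u_n - u_m\|_0 < \varepsilon$ whenever $n, m \geq n_0$. Fix $n \geq n_0$ and set $v_m := u_m - u_n \in E$. As $m \to \infty$ we have $v_m \to u - u_n$ in $L^{1^*}(\mathbb{R}^N)$, and $(v_m)$ is bounded in $E$ since its total variations are eventually below $\varepsilon$. Applying Lemma \ref{semicontinuity} to $(v_m)$ yields
$$
\int_{\mathbb{R}^N}|D(u - u_n)| \leq \liminf_{m \to \infty}\int_{\mathbb{R}^N}|D(u_m - u_n)| \leq \varepsilon .
$$
Since also $|u - u_n|_{1^*} \to 0$, we conclude $\|u_n - u\|_0 \to 0$, which proves completeness.

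The essential obstacle — and the reason completeness does not follow for free from completeness of $L^{1^*}(\mathbb{R}^N)$ — is that the total variation is only lower semicontinuous, not continuous, under $L^{1^*}$-convergence; one therefore cannot simply pass to the limit in $\int_{\mathbb{R}^N}|D(u_n-u)|$ directly. The trick of applying Lemma \ref{semicontinuity} to the sequence $(u_m - u_n)_m$ is precisely what converts the uniform Cauchy control on the total variations into genuine convergence of the full norm, and it is the step I expect to require the most care.
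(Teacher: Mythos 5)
Your proof is correct and follows essentially the same route as the paper: both obtain the limit $u$ from completeness of $L^{1^*}(\mathbb{R}^N)$ and then apply the lower semicontinuity lemma (Lemma \ref{semicontinuity}) to the differences $u_m - u_n$ to convert the Cauchy bound on total variations into convergence in $\|\cdot\|_0$. Your additional remarks (verification of the norm axioms, the explicit boundedness check required by the lemma, and the observation that lower semicontinuity is the crux) are sound but do not change the argument.
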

\begin{proof}
Let $(u_n) \subset E$ be a Cauchy sequence. Then for all $\epsilon > 0$, there exists $n_0 \in \mathbb{N}$ such that
\begin{equation}
\int_{\mathbb{R}^N}|Du_n - Du_m| + |u_n - u_m|_{1^*} < \epsilon, \quad n,m \geq n_0. \label{nm}
\end{equation}
Since $(u_n)$ is also a Cauchy sequence in $L^{1^*}(\mathbb{R}^N)$, there exists $u$ such that $u_n \to u$ in $L^{1^*}(\mathbb{R}^N)$, as $n \to +\infty$. Then, for all $m \in \mathbb{N}$ such that $m \geq n_0$, since $u_n - u_m \to u - u_m$ as $n \to +\infty$, Lemma \ref{semicontinuity} implies that
$$
\int_{\mathbb{R}^N}|D(u - u_m)| \leq \liminf_{n \to \infty}\int_{\mathbb{R}^N}|D(u_n - u_m)| < \epsilon.
$$
But this implies that $u \in E$ and also that $u_m \to u$ in $E$, as $m \to +\infty$.
\end{proof}

The following result states the density of $C^{\infty}_0(\mathbb{R}^N)\cap E$ in $E$ with respect to a notion of convergence which resembles the intermediate convergence in $BV$ (see \cite{Buttazzo}[Chapter 10]), i.e., $\displaystyle E = \overline{C^{\infty}_0(\mathbb{R}^N)\cap E}^{\tau}$ where $\tau$ is the topology induced by the notion of convergence given by (\ref{inter1}) and (\ref{inter2}) (see below). This result is going to be crucial in order to prove a version of the Sobolev inequality in the space $E$.

\begin{proposition}
For all $u \in E$, there exists $(u_n) \subset C^{\infty}_0(\mathbb{R}^N)\cap E$ such that
\begin{eqnarray}
u_n \to u \quad \mbox{in $L^{1^*}(\mathbb{R}^N)$,} \label{inter1}\\
\int_{\mathbb{R}^N}|\nabla u_n|dx \to \int_{\mathbb{R}^N}|D u|. \label{inter2}
\end{eqnarray}
\label{density}
\end{proposition}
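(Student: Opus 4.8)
I want to prove Proposition \ref{density}: every $u \in E$ can be approximated by smooth compactly supported functions in the sense of (\ref{inter1})--(\ref{inter2}). This is the analogue of the classical smooth approximation theorem in $BV$, where the standard tool is mollification. Let me think about how to adapt that argument to the unbounded setting.

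The plan is to imitate the classical theorem on approximation of $BV$ functions by smooth functions in the intermediate (strict) convergence, combining a truncation that produces compact support with a mollification that produces smoothness, and then to extract a diagonal sequence. The guiding principle is that one of the two required inequalities, namely $\int_{\mathbb{R}^N}|Du| \leq \liminf_n \int_{\mathbb{R}^N}|\nabla u_n|dx$, comes for free from Lemma \ref{semicontinuity} as soon as the approximants converge to $u$ in $L^{1^*}(\mathbb{R}^N)$; so the genuine work is to build approximants for which the reverse inequality $\limsup_n \int_{\mathbb{R}^N}|\nabla u_n|dx \leq \int_{\mathbb{R}^N}|Du|$ also holds, which then forces (\ref{inter2}).

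First I would fix a cutoff $\eta_R \in C^\infty_0(\mathbb{R}^N)$ with $\eta_R \equiv 1$ on $B_R(0)$, $\eta_R \equiv 0$ outside $B_{2R}(0)$, $0 \leq \eta_R \leq 1$ and $|\nabla \eta_R| \leq C/R$, and set $v_R := \eta_R u$. Since $D(\eta_R u) = \eta_R Du + u\, \nabla \eta_R\, dx$, the triangle inequality for measures gives
$$\int_{\mathbb{R}^N}|Dv_R| \leq \int_{\mathbb{R}^N} \eta_R\, |Du| + \frac{C}{R}\int_{B_{2R}(0) \setminus B_R(0)} |u|\, dx.$$
The first term converges to $\int_{\mathbb{R}^N}|Du|$ by dominated convergence, since $\eta_R \to 1$ pointwise and $\int_{\mathbb{R}^N}|Du| < \infty$. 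The decisive point is that the boundary term vanishes: by H\"older's inequality with exponents $1^*$ and $N$, using the exact relation $1 - 1/1^* = 1/N$ together with $|B_{2R}(0)\setminus B_R(0)|^{1/N} \leq CR$, one obtains
$$\frac{C}{R}\int_{B_{2R}(0)\setminus B_R(0)}|u|\,dx \leq C\, |u|_{L^{1^*}(B_{2R}(0)\setminus B_R(0))} \longrightarrow 0 \quad \mbox{as } R \to +\infty,$$
because $u \in L^{1^*}(\mathbb{R}^N)$ and the tail of its $L^{1^*}$-norm vanishes. Hence $\limsup_R \int_{\mathbb{R}^N}|Dv_R| \leq \int_{\mathbb{R}^N}|Du|$, while at the same time $v_R \to u$ in $L^{1^*}(\mathbb{R}^N)$ since $|v_R - u|_{1^*} \leq |u|_{L^{1^*}(B_R^c(0))} \to 0$, so Lemma \ref{semicontinuity} supplies the matching lower bound and we conclude $\int_{\mathbb{R}^N}|Dv_R| \to \int_{\mathbb{R}^N}|Du|$. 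This truncation estimate is precisely where the choice of $L^{1^*}$ as the ambient Lebesgue space matters: it is the exponent that makes the cutoff error scale-invariant so that it can be absorbed, and I expect this step to be the main obstacle.

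It then remains to smooth each $v_R$, which now has compact support. Letting $\rho_\delta$ be a standard mollifier and setting $w_{R,\delta} := v_R * \rho_\delta \in C^\infty_0(\mathbb{R}^N)$, we have $w_{R,\delta} \to v_R$ in $L^{1^*}(\mathbb{R}^N)$ as $\delta \to 0$, and since $\nabla w_{R,\delta} = (Dv_R) * \rho_\delta$, Fubini's theorem yields
$$\int_{\mathbb{R}^N}|\nabla w_{R,\delta}|\,dx \leq \int_{\mathbb{R}^N}\int_{\mathbb{R}^N}\rho_\delta(x-y)\,d|Dv_R|(y)\,dx = \int_{\mathbb{R}^N}|Dv_R|,$$
which combined again with Lemma \ref{semicontinuity} gives $\int_{\mathbb{R}^N}|\nabla w_{R,\delta}|\,dx \to \int_{\mathbb{R}^N}|Dv_R|$ as $\delta \to 0$. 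Finally, combining these limits with the convergences $v_R \to u$ in $L^{1^*}(\mathbb{R}^N)$ and $\int_{\mathbb{R}^N}|Dv_R| \to \int_{\mathbb{R}^N}|Du|$ from the truncation step, a standard diagonal argument selects $\delta = \delta(R)$ and defines $u_n := w_{n,\delta(n)} \in C^\infty_0(\mathbb{R}^N)\cap E$ satisfying both (\ref{inter1}) and (\ref{inter2}), completing the proof.
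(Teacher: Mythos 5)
Your proof is correct, but it follows a genuinely different route from the paper's. The paper does not truncate: it fixes $R$ with $\int_{B_R(0)^c}|Du| < \epsilon$, takes a partition of unity $\{\varphi_1,\varphi_2\}$ subordinate to $\left\{B_{R+\delta}(0), \overline{B_{R-\delta}(0)}^c\right\}$, and mollifies each piece separately, setting $u_\epsilon = \rho_1\star(u\varphi_1)+\rho_2\star(u\varphi_2)$ with the mollifiers tuned so that \eqref{mollifier1}--\eqref{mollifier5} hold; the gradient error is handled through the cancellation $\nabla\varphi_1+\nabla\varphi_2\equiv 0$, and \emph{both} inequalities in \eqref{inter2} are extracted directly from these estimates, so the tail that matters there is that of the measure $|Du|$, not of $u$ itself. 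You instead cut off with $\eta_R$ and absorb the commutator term $\frac{C}{R}\int_{B_{2R}(0)\setminus B_R(0)}|u|\,dx$ via H\"older and the scaling identity $1-1/1^*=1/N$ --- so the tail you use is $|u|_{L^{1^*}(B_R(0)^c)}$ --- then mollify once and let Lemma \ref{semicontinuity} supply the matching lower bounds, upgrading your two $\limsup$ estimates to the limits in \eqref{inter2}, finishing with a diagonal extraction. Each approach buys something. Yours yields approximants that are genuinely compactly supported, which the paper's are not: the summand $\rho_2\star(u\varphi_2)$ is supported in the unbounded set $\overline{B_{R-\delta}(0)}^c$, so the paper's $u_\epsilon$ is smooth and vanishes at infinity but does not have compact support; your argument therefore proves the stronger, literal reading of $C^\infty_0(\mathbb{R}^N)$ as $C^\infty_c(\mathbb{R}^N)$, and the double use of semicontinuity spares you the delicate mollifier condition \eqref{mollifier5}. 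The paper's construction, in exchange, produces one explicit $u_\epsilon$ per $\epsilon$ with a quantitative $5\epsilon$ bound and needs no diagonal step. Two points are worth making explicit if you write this up: the product rule $D(\eta_R u)=\eta_R\,Du + u\nabla\eta_R\,dx$ is legitimate because $u\in E$ implies $u\in BV_{loc}(\mathbb{R}^N)$; and your observation about the exponent is exactly right --- the $L^{1^*}$ membership is what makes the cutoff error vanish, and truncation would fail for a general function of finite total variation not in $E$ (e.g.\ $u\equiv 1$).
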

\begin{proof}
Given $\epsilon > 0$, let $R > 0$ be such that
\begin{equation}
\int_{B_R(0)^c}|Du| < \epsilon. \label{inter3}
\end{equation}
Let $\{\varphi_1,\varphi_2\}$ be a partition of the unity subordinated to the open covering $\left\{B_{R+\delta}(0), \overline{B_{R-\delta}(0)}^c\right\}$ of $\mathbb{R}^N$, where $\delta > 0$ is fixed. Then note that $\varphi_1 \in C^\infty_c(B_{R+\delta}(0))$, $\varphi_2 \in C^\infty_c(\overline{B_{R-\delta}(0)}^c)$, $0 \leq \varphi_i \leq 1$, $i = 1,2$ and $\varphi_1 + \varphi_2 = 1$ in $\mathbb{R}^N$. Let us suppose also without lack of generality that $\varphi_1 \equiv 1$ in $B_R(0)$ and $\varphi_2 \equiv 1$ in $\overline{B_R(0)}^c$. Let $\rho_1, \rho_2 \in C^\infty_c(\mathbb{R}^N)$ two mollifiers such that
\begin{equation}
supp(\rho_1 \star (\varphi_1 u)) \subset B_{R+\delta}(0), \label{mollifier1}
\end{equation}
\begin{equation}
supp(\rho_2 \star (\varphi_2 u)) \subset \overline{B_{R-\delta}(0)}^c, \label{mollifier2}
\end{equation}
\begin{equation}
\int_{\mathbb{R}^N}|\rho_i \star(u\varphi_i) - u\varphi_i|^{1^*}dx < \epsilon, \quad  i = 1,2 \label{mollifier3}
\end{equation}
\begin{equation}
\int_{\mathbb{R}^N}|\rho_i \star(u\nabla\varphi_i) - u\nabla\varphi_i|dx < \epsilon,  \quad  i = 1,2 \label{mollifier4}
\end{equation}
\begin{equation}
\int_{\mathbb{R}^N}|\rho_1\star(\varphi_1Du)|dx - \int_{\mathbb{R}^N}|\varphi_1Du| < \epsilon.  \label{mollifier5}
\end{equation}

Let us define
\begin{equation}
u_\epsilon = \rho_1 \star (u\varphi_1) + \rho_2 \star (u\varphi_2)
\label{u_epsilon}
\end{equation}
and note that clearly $u_\epsilon \in C_0^\infty(\mathbb{R}^N)$.
Also, since $\varphi_1 + \varphi_2 = 1$ in $\mathbb{R}^N$, by (\ref{mollifier3})
\begin{eqnarray*}
\int_{\mathbb{R}^N}|u - u_\epsilon|^{1^*}dx & \leq & \int_{\mathbb{R}^N}|u \varphi_1 - \rho_1\star(u\varphi_1)|^{1^*}dx + \int_{\mathbb{R}^N}|u \varphi_2 - \rho_1\star(u\varphi_2)|^{1^*}dx\\
& < & 2\epsilon
\end{eqnarray*}
and this implies in (\ref{inter1}).

Since $\nabla \varphi_1 + \nabla \varphi_2 \equiv 0$ in $\mathbb{R}^N$, it follows that
\begin{eqnarray*}
\nabla u_\epsilon & = & \sum_{i = 1}^2\rho_i\star (\varphi_iDu)  + \sum_{i = 1}^2\rho_i\star (u\nabla \varphi_i)\\
& = & \sum_{i = 1}^2\rho_i\star (\varphi_iDu)  + \sum_{i = 1}^2(\rho_i\star (u\nabla \varphi_i) - u\nabla \varphi_i).
\end{eqnarray*}
Then (\ref{mollifier4}) and (\ref{inter3}) imply that
\begin{eqnarray*}
\int_{\mathbb{R}^N}|\nabla u_\epsilon|dx - \int_{\mathbb{R}^N}|\rho_1\star(\varphi_1Du)| dx& \leq & \int_{\mathbb{R}^N}|\rho_2\star(\varphi_2Du)|dx + \sum_{i=1}^2 \int_{\mathbb{R}^N}|\rho_i \star(u\nabla\varphi_i) - u\nabla\varphi_i|dx\\
& \leq & \int_{\mathbb{R}^N}|\varphi_2Du| + 2\epsilon\\
& \leq & \int_{B_R(0)^c}|Du| + 2\epsilon\\
& < & 3\epsilon.
\end{eqnarray*}
Hence (\ref{mollifier5}) and last inequality imply that
\begin{eqnarray*}
\left| \int_{\mathbb{R}^N}|\nabla u_\epsilon|dx - \int_{\mathbb{R}^N}|Du| \right| & \leq &  \left| \int_{\mathbb{R}^N}|\nabla u_\epsilon|dx - \int_{\mathbb{R}^N}|\rho_1\star(\varphi_1Du)|dx\right|\\
& &  + \left| \int_{\mathbb{R}^N}|\rho_1\star(\varphi_1Du)|dx -  \int_{\mathbb{R}^N}|Du| \right|\\
& < & 3\epsilon + \left| \int_{\mathbb{R}^N}|\rho_1\star(\varphi_1Du)|dx - \int_{\mathbb{R}^N}|\varphi_1Du|\right|\\
& &  +  \left| \int_{\mathbb{R}^N}|\varphi_1Du| - \int_{\mathbb{R}^N}|Du|\right|\\
& < & 4\epsilon + \int_{\mathbb{R}^N}|\varphi_1 - 1||Du|\\
& \leq & 4\epsilon + \int_{B_R(0)^c}|Du|\\
& < & 5\epsilon.
\end{eqnarray*}
This implies that $u_\epsilon \in C^\infty_0(\mathbb{R}^N)\cap E$ and also in (\ref{inter2}) and finish the proof.
\end{proof}

Now we prove that the Sobolev inequality holds in $E$.

\begin{proposition}
There exists $C > 0$ such that, for all $u \in E$
\begin{equation}
|u|_{1^*} \leq C\int_{\mathbb{R}^N}|Du|.
\label{sobolevinequality}
\end{equation}
\end{proposition}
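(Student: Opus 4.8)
The plan is to derive the Sobolev inequality in $E$ from the classical Gagliardo–Nirenberg–Sobolev inequality for smooth compactly supported functions, and then pass to the limit using the density result just established in Proposition \ref{density}. The key point is that the inequality $|u|_{1^*} \leq C \int_{\mathbb{R}^N}|\nabla u|dx$ is already known for $u \in C^\infty_c(\mathbb{R}^N)$ (this is the standard Sobolev inequality for $W^{1,1}$, with the sharp or any admissible constant $C$ depending only on $N$), so all that remains is to transfer it to a general $u \in E$.

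First I would fix an arbitrary $u \in E$ and invoke Proposition \ref{density} to obtain a sequence $(u_n) \subset C^\infty_0(\mathbb{R}^N)\cap E$ satisfying $u_n \to u$ in $L^{1^*}(\mathbb{R}^N)$ and $\int_{\mathbb{R}^N}|\nabla u_n|dx \to \int_{\mathbb{R}^N}|Du|$. For each $n$, since $u_n$ is smooth with compact support, the classical Sobolev inequality gives $|u_n|_{1^*} \leq C \int_{\mathbb{R}^N}|\nabla u_n|dx$. Then I would pass to the limit on both sides: the left-hand side converges to $|u|_{1^*}$ because $L^{1^*}$ convergence implies convergence of the norms, and the right-hand side converges to $C\int_{\mathbb{R}^N}|Du|$ by (\ref{inter2}). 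This yields (\ref{sobolevinequality}) directly.

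I anticipate no serious obstacle, since both convergences needed are exactly the two conclusions of the density proposition. The only point requiring minor care is confirming that $u_n \in C^\infty_0(\mathbb{R}^N)$ indeed enjoys the classical $W^{1,1}$-Sobolev inequality with a constant $C$ independent of $n$ — but this constant is purely dimensional, so uniformity is automatic. One could alternatively argue via lower semicontinuity (Lemma \ref{semicontinuity}) if one wanted the sharp constant, but the density approach is the cleanest route and suffices to establish existence of some $C > 0$.
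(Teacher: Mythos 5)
Your proposal is correct and follows essentially the same route as the paper: apply the classical Gagliardo--Nirenberg--Sobolev inequality to the approximating sequence from Proposition \ref{density} and pass to the limit using (\ref{inter1}) and (\ref{inter2}). The only cosmetic difference is that the paper bounds $|u|_{1^*} \leq \liminf_n |u_n|_{1^*}$ via Fatou's lemma, whereas you use the (equally valid, in fact more direct) observation that strong convergence in $L^{1^*}(\mathbb{R}^N)$ implies convergence of the norms.
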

\begin{proof}
For $u \in E$, by Proposition \ref{density}, there exist $(u_n) \subset C^\infty_0(\mathbb{R}^N)$ such that (\ref{inter1}) and (\ref{inter2}) hold. Then Fatou Lemma and Gagliardo-Nirenberg inequality applied for $u_n$ imply that
\begin{eqnarray*}
|u|_{1^*} & \leq & \liminf_{n \to \infty}|u_n|_{1^*}\\
& \leq & \liminf_{n \to \infty}C\int_{\mathbb{R}^N}|\nabla u_n|dx\\
& = & C\int_{\mathbb{R}^N}|Du|
\end{eqnarray*}
and proves the result.
\end{proof}

By the last result, the following norm in $E$ defined by
$$\|u\|_E = \int_{\mathbb{R}^N}|Du|$$
is equivalent to $\|\cdot\|_0$ and then $(E,\|\cdot\|_E)$ is also a Banach space.

For a vectorial Radon measure $\mu \in \mathcal{M}(\mathbb{R}^N,\mathbb{R}^N)$, we denote by $\mu = \mu^a + \mu^s$ the usual decomposition stated in the Radon Nikodyn Theorem, where $\mu^a$ and $\mu^s$ are, respectively, the absolutely continuous and the singular parts with respect to the $N-$dimensional Lebesgue measure $\mathcal{L}^N$. We denote by $|\mu|$, the absolute value of $\mu$, the scalar Radon measure defined like in \cite{Buttazzo} (pg. 125). By $\displaystyle \frac{\mu}{|\mu|}(x)$ we denote the usual Lebesgue derivative of $\mu$ with respect to $|\mu|$, given by
$$\frac{\mu}{|\mu|}(x) = \lim_{r \to 0}\frac{\mu(B_r(x))}{|\mu|(B_r(x))}.$$ 

Let us define 
$$
E_V = \left\{u \in E; \, \int_{\mathbb{R}^N}V(x)|u|dx < \infty\right\}
$$
endowed with the following norm which turns it a Banach space
$$
\|u\| : = \int_{\mathbb{R}^N}|Du| + \int_{\mathbb{R}^N}V(x)|u|dx.
$$

It can be proved that $\mathcal{J}: E_V \to \mathbb{R}$, given by
\begin{equation}
\mathcal{J}(u) = \int_{\mathbb{R}^N} |Du| + \int_{\mathbb{R}^N} V(x)|u|dx,
\label{J}
\end{equation}
is a convex functional and Lipschitz continuous in its domain.

It can be proved that $E_V$ is a {\it lattice}, i.e., if $u,v \in E_V$, then $\max\{u,v\}, \min\{u,v\} \in E_V$ and also
\begin{equation}
\mathcal{J}(\max\{u,v\}) + \mathcal{J}(\min\{u,v\}) \leq \mathcal{J}(u) + \mathcal{J}(v), \quad \forall u,v \in E_V.
\label{lattice}
\end{equation}

Although non-smooth, the functional $\mathcal{J}$ admits some directional derivatives. More specifically, as is shown in \cite{Anzellotti}, given $u \in E_V$, for all $v \in E_V$ such that $(Dv)^s$ is absolutely continuous with respect to $(Du)^s$, it follows that
$$
\mathcal{J}'(u)v = \int_{\mathbb{R}^N} \frac{(Du)^a(Dv)^a}{|(Du)^a|}dx + \int_{\mathbb{R}^N} \frac{Du}{|Du|}(x)\frac{Dv}{|Dv|}(x)|(Dv)|^s + \int_{\mathbb{R}^N}V(x)\mbox{sgn}(u) v dx.
$$
In particular, note that
\begin{equation}
\mathcal{J}'(u)u = \mathcal{J}(u).
\label{derivadaJ}
\end{equation}

Now let us present the energy functional associated to (\ref{Pintro}).

Let $\Phi: E_V \to \mathbb{R}$ be given by
\begin{equation}
\Phi(u) = \mathcal{J}(u) - \mathcal{F}(u).
\label{Phi}
\end{equation}
where $\mathcal{F}: E_V \to \mathbb{R}$ is defined by
\begin{equation}
\mathcal{F}(u) = \int_{\mathbb{R}^N}K(x)F(u)dx.
\label{F}
\end{equation}
It is a simple matter to prove that $\mathcal{F} \in C^1(E_V, \mathbb{R})$.

Now let us make precise the sense of solution we are considering here. Since $\Phi$ can be written as the difference between a Lipschitz and a smooth functional, we say that $u_0 \in E_V$ is a solution of (\ref{Pintro}) if $0 \in \partial \Phi(u_0)$, where $\partial \Phi(u_0)$ denotes the generalized gradient of $\Phi$ in $u_0$, as defined in \cite{Chang}. It follows that this is equivalent to $\mathcal{F}'(u_0) \in \partial \mathcal{J}(u_0)$ and, since $\mathcal{J}$ is convex, this can be written as
\begin{equation}
\mathcal{J}(v) - \mathcal{J}(u_0) \geq \mathcal{F}'(u_0)(v-u_0), \quad \forall v \in E_V.
\label{BVsolution}
\end{equation}
Hence all $u_0 \in E_V$ such that (\ref{BVsolution}) holds is going to be called a bounded variation solution of (\ref{Pintro}). 

\subsection{The Euler-Lagrange equation}
\label{eulerlagrangesection}

\hspace{.5cm} As we said in the introduction,  problem (\ref{Pintro}) is just the formal version of the Euler-Lagrange equation associated to the functional $\Phi$, since contains expressions that doesn't make sense when $\nabla u = 0$ or $u = 0$. In order to present a precise form of an Euler-Lagrange equation satisfied by all bounded variation critical points of $\Phi$, let us follow the arguments in \cite{Kawohl}.

First of all let us consider the extension of the functionals $\mathcal{J}, \mathcal{F}$ and $\Phi$ to $L^{1^*}(\mathbb{R}^N)$ given by $\overline{\mathcal{J}}, \overline{\mathcal{F}}, \overline{\Phi}: L^{1^*}(\mathbb{R}^N) \to \mathbb{R}$, where
$$
\overline{\mathcal{J}}(u) = 
\left\{
\begin{array}{ll}
\mathcal{J}(u), & \mbox{if $u \in E_V$},\\
+\infty, & \mbox{if $u \in L^{1^*}(\mathbb{R}^N)\backslash E_V$},
\end{array}
\right.
$$
$$
\overline{\mathcal{F}}(u) = \int_{\mathbb{R}^N}K(x)F(u)dx
$$
and $\overline{\Phi} = \overline{\mathcal{J}} - \overline{\mathcal{F}}$. Note that $\overline{\mathcal{F}}$ belongs to $C^1(L^{1^*}(\mathbb{R}^N), \mathbb{R})$ and also $\overline{\mathcal{J}}$ is a convex lower semicontinuous functional in $L^{1^*}(\mathbb{R}^N)$. Then the subdifferential (in the sense of \cite{Szulkin}) of $\overline{\mathcal{J}}$, denoted by $\partial \overline{\mathcal{J}}$, is well defined. The following result is crucial to the construction of such Euler-Lagrange equation.

\begin{lemma}
If  $u \in E_V$ is such that $0 \in \partial \Phi(u)$, then $0 \in \partial \overline{\Phi}(u)$.
\end{lemma}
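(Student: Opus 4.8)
The plan is to rewrite both conditions $0\in\partial\Phi(u)$ and $0\in\partial\overline{\Phi}(u)$ as the variational inequalities that characterize critical points of a convex-plus-smooth functional, and then to observe that the $+\infty$-extension of $\mathcal{J}$ merely enlarges the set of admissible test functions by directions along which the inequality is vacuous. First I would recall, exactly as in the derivation of (\ref{BVsolution}), that since $\mathcal{J}$ is convex and $\mathcal{F}\in C^1(E_V,\mathbb{R})$ the hypothesis $0\in\partial\Phi(u)$ is equivalent to $\mathcal{F}'(u)\in\partial\mathcal{J}(u)$, i.e.
$$\mathcal{J}(v)-\mathcal{J}(u)\geq \mathcal{F}'(u)(v-u),\qquad\forall\,v\in E_V.$$
Dually, since $\overline{\mathcal{F}}\in C^1(L^{1^*}(\mathbb{R}^N),\mathbb{R})$ and $\overline{\mathcal{J}}$ is convex and lower semicontinuous on $L^{1^*}(\mathbb{R}^N)$, the condition $0\in\partial\overline{\Phi}(u)$ in the sense of \cite{Szulkin} is equivalent to $\overline{\mathcal{F}}'(u)\in\partial\overline{\mathcal{J}}(u)$, namely
$$\overline{\mathcal{J}}(v)-\overline{\mathcal{J}}(u)\geq \overline{\mathcal{F}}'(u)(v-u),\qquad\forall\,v\in L^{1^*}(\mathbb{R}^N).$$
Thus the task reduces to upgrading the first inequality, known on $E_V$, to the second one on all of $L^{1^*}(\mathbb{R}^N)$.

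The core step is a simple case distinction on the test function $v$. If $v\in L^{1^*}(\mathbb{R}^N)\setminus E_V$, then by definition $\overline{\mathcal{J}}(v)=+\infty$, while $\overline{\mathcal{J}}(u)=\mathcal{J}(u)$ is finite because $u\in E_V$, and $\overline{\mathcal{F}}'(u)(v-u)$ is a finite real number; hence the inequality reads $+\infty\geq(\text{finite})$ and holds trivially. If instead $v\in E_V$, then $\overline{\mathcal{J}}(v)=\mathcal{J}(v)$, and the restriction of $\overline{\mathcal{F}}'(u)$ to $E_V$ coincides with $\mathcal{F}'(u)$ --- both act as $w\mapsto\int_{\mathbb{R}^N}K(x)f(u)w\,dx$, which is legitimate since $E_V\hookrightarrow L^{1^*}(\mathbb{R}^N)$ continuously by the Sobolev inequality (\ref{sobolevinequality}) --- so the second inequality reduces to the first one, true by hypothesis. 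Combining the two cases yields the inequality for every $v\in L^{1^*}(\mathbb{R}^N)$, i.e. $0\in\partial\overline{\Phi}(u)$.

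I expect essentially no analytic obstacle: the whole content is the soft remark that the extra test directions created by the $+\infty$-extension impose no new constraint. The only points I would pause to justify are auxiliary. First, that $\overline{\mathcal{F}}'(u)$ is a genuine element of $(L^{1^*}(\mathbb{R}^N))'=L^{N}(\mathbb{R}^N)$, so that $\overline{\mathcal{F}}'(u)(v-u)$ is finite for every $v$; this follows from $(f_1)$--$(f_3)$ together with $(VK_0)$, which guarantee $K(x)f(u)\in L^{N}(\mathbb{R}^N)$. Second, the two subdifferential characterizations used above, which are the standard descriptions of the generalized gradient of \cite{Chang} and of the Szulkin subdifferential of \cite{Szulkin} for the sum of a $C^1$ functional and a convex one, and for which I would simply cite those references. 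If any care is needed, it is only in making sure the two subdifferentials are taken in mutually compatible senses, so that the restriction identity $\overline{\mathcal{F}}'(u)|_{E_V}=\mathcal{F}'(u)$ and the equality of domains $\{\overline{\mathcal{J}}<+\infty\}=E_V$ line up as claimed.
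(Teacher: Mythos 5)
Your proposal is correct and follows essentially the same route as the paper's own proof: both reduce $0\in\partial\overline{\Phi}(u)$ to the variational inequality $\overline{\mathcal{J}}(v)-\overline{\mathcal{J}}(u)\geq \overline{\mathcal{F}}\,'(u)(v-u)$ on $L^{1^*}(\mathbb{R}^N)$ and verify it by the same two-case split, the inequality being inherited from (\ref{BVsolution}) when $v\in E_V$ and trivial (of the form $+\infty\geq$ finite) when $v\notin E_V$. Your extra remarks --- that $\overline{\mathcal{F}}\,'(u)$ defines a genuine element of $L^{N}(\mathbb{R}^N)$ so the right-hand side is finite, and that $\overline{\mathcal{F}}\,'(u)|_{E_V}=\mathcal{F}'(u)$ with the correct weight $K(x)f(u)$ --- are points the paper uses implicitly (its proof even drops the factor $K$ in the integrand), so your write-up is, if anything, slightly more careful.
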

\begin{proof}
In fact, let $u \in E_V$ such that $0 \in \partial \Phi(u)$ and note that $u$ satisfies (\ref{BVsolution}).
We would like to prove that
$$
\overline{\mathcal{J}}(v) - \overline{\mathcal{J}}(u) \geq \overline{\mathcal{F}}\, '(u)(v-u), \quad \forall v \in L^{1^*}(\mathbb{R}^N).
$$
Let $v \in L^{1^*}(\mathbb{R}^N)$ and note that:
\begin{itemize}
\item if $v \in E_V \cap L^{1^*}(\mathbb{R}^N)$, then
\begin{eqnarray*}
\overline{\mathcal{J}}(v) - \overline{\mathcal{J}}(u) & = & \mathcal{J}(v) - \mathcal{J}(u)\\
& \geq & \mathcal{F}'(u)(v-u)\\
& = & \int_{\mathbb{R}^N}f(u)(v-u)dx\\
& = & \overline{\mathcal{F}}\, '(u)(v-u);
\end{eqnarray*}

\item if $u \in L^{1^*}(\mathbb{R}^N)\backslash E_V$, since $\overline{\mathcal{J}}(v) = +\infty$ and $\overline{\mathcal{J}}(u) < +\infty$, it follows that
\begin{eqnarray*}
\overline{\mathcal{J}}(v) - \overline{\mathcal{J}}(u) & = & +\infty\\
& \geq & \overline{\mathcal{F}}\, '(u)(v-u).
\end{eqnarray*}
\end{itemize}
Therefore the result follows.
\end{proof}

Now suppose that $u \in E_V$ is a bounded variation solution of (\ref{Pintro}), i.e., that $u$ satisfies (\ref{BVsolution}). Since $0 \in \partial \Phi(u)$, by the last result it follows that $0 \in \partial \overline{\Phi}(u)$. Since $\overline{\mathcal{J}}$ is convex and $\overline{\mathcal{F}}$ is smooth, it follows that $\overline{\mathcal{F}}\, '(u) \in \partial \overline{\mathcal{J}}(u)$. Now, defining $\overline{\mathcal{J}_1}(u) := \int_{\mathbb{R}^N} |Du|$ and $\overline{\mathcal{J}_2}(u) := \int_{\mathbb{R}^N} V(x)|u|dx$, it follows that
$$
\overline{\mathcal{F}}\, '(u) \in \partial \overline{\mathcal{J}}(u) \subset \partial \overline{\mathcal{J}_1}(u) + \partial \overline{\mathcal{J}_2}(u).
$$
This means that there exist $z_1^*, z_2^* \in L^N(\mathbb{R}^N)$ such that $z_1^* \in \partial \overline{\mathcal{J}_1}(u)$, $z_2^* \in \partial \overline{\mathcal{J}}_2(u)$ and
$$
\overline{\mathcal{F}}\, '(u) = z_1^* +z_2^* \quad \mbox{ in $L^N(\mathbb{R}^N)$.}
$$
Repeating the arguments of \cite{Kawohl}[Proposition 4.23, pg. 529], it follows that there exist $z \in L^\infty(\mathbb{R}^N, \mathbb{R}^N)$ such that $\|z\|_\infty \leq 1$,
\begin{equation}
-\mbox{div}{z} = z_1^* \quad \mbox{ in $L^N(\mathbb{R}^N)$}
\label{eulerlagrange1}
\end{equation}
and 
\begin{equation}
 -\int_{\mathbb{R}^N}u \mbox{div}z dx = \int_{\mathbb{R}^N}|Du|,
 \label{eulerlagrange2}
 \end{equation}
 where the divergence in (\ref{eulerlagrange1}) has to be understood in the distributional sense. Moreover, the same result implies that $z_2^*$ is such that 
\begin{equation}
z_2^* |u| = V(x)u, \quad \mbox{a.e. in $\mathbb{R}^N$.}
\label{eulerlagrange3}
\end{equation}
Therefore, from (\ref{eulerlagrange1}), (\ref{eulerlagrange2}) and (\ref{eulerlagrange3}) it follows that $u$ satisfies
\begin{equation}
\left\{
\begin{array}{l}
\exists z \in L^\infty(\mathbb{R}^N,\mathbb{R}^N), \, \, \|z\|_\infty \leq 1,\, \,  \mbox{div}z \in L^N(\mathbb{R}^N), \, \, -\int_{\mathbb{R}^N}u \mbox{div}z dx = \int_{\mathbb{R}^N}|Du|,\\
\exists z_2^* \in L^N(\mathbb{R}^N),\, \, z_2^*|u| = V(x) u \quad \mbox{a.e. in $\mathbb{R}^N$},\\
-\mbox{div} z + z_2^* = f(u), \quad \mbox{a.e. in $\mathbb{R}^N$}.
\end{array}
\right.
\end{equation}

\subsection{A compactness result}

\hspace{.5cm} Let us define the weighted Lebesgue space
$$
L^q_K(\mathbb{R}^N) = \left\{u: \mathbb{R}^N \to \mathbb{R}; \, \mbox{$u$ is measurable and $\displaystyle \int_{\mathbb{R}^N}K(x)|u|^qdx < +\infty$}\right\},
$$
which is Banach spaces when endowed with the norm
$$
|u|_{q,K}: = \left(\int_{\mathbb{R}^N}K(x)|u|^qdx\right)^\frac{1}{q}.
$$

Let us prove a compactness result which is going to be crucial in our argument.

\begin{proposition}
Assume that $(V,K) \in \mathcal{K}$. Then $E_V$ is compactly embedded in $L^r_K(\mathbb{R}^N)$ for all $r \in (1,1^*)$ if $(K_2)$ holds. If $(K_3)$ holds, then $E_V$ is compactly embedded in $L^q_K(\mathbb{R}^N)$, where $q$ is given in $(VK_3)$.
\label{propositioncompactness}
\end{proposition}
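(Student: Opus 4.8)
The plan is to prove that bounded sequences in $E_V$ are precompact in the target weighted space, and the standard strategy reduces this to two ingredients: local compactness, and a uniform decay of the tails $\int_{B_R^c(0)}K|u_n|^r\,dx$ as $R \to \infty$. So I would start with a bounded sequence $(u_n) \subset E_V$; by the Sobolev inequality (\ref{sobolevinequality}) and the definition of $\|\cdot\|$, the quantities $|u_n|_{1^*}$ and $\int_{\mathbb{R}^N}V(x)|u_n|\,dx$ are bounded by a constant $C$. On each ball $B_R(0)$ the restrictions form a bounded sequence in $BV(B_R(0))$, so by the Rellich--Kondrachov theorem for $BV$ together with a diagonal argument over $R \in \mathbb{N}$, after passing to a subsequence I may assume $u_n \to u$ in $L^s_{loc}(\mathbb{R}^N)$ for every $s \in [1,1^*)$ and a.e.\ in $\mathbb{R}^N$. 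The whole difficulty then lies in the uniform tail estimate.

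For the $(VK_2)$ case I would fix $r \in (1,1^*)$ and decompose $B_R^c(0)$ according to the level sets of $u_n$. Where $|u_n| < \kappa$ I would use $|u_n|^r \le \kappa^{r-1}|u_n|$ and $K \le \|K/V\|_\infty V$ to get
\begin{equation*}
\int_{B_R^c(0)\cap\{|u_n|<\kappa\}}K|u_n|^r\,dx \le \kappa^{r-1}\|K/V\|_\infty\int_{\mathbb{R}^N}V(x)|u_n|\,dx \le \kappa^{r-1}\|K/V\|_\infty C,
\end{equation*}
which becomes small, uniformly in $n$ and $R$, once $\kappa$ is small. On the super-level set $A_n:=\{|u_n|\ge\kappa\}$ Chebyshev's inequality and the $L^{1^*}$ bound give $|A_n|\le (C/\kappa)^{1^*}$ uniformly in $n$; then writing $K|u_n|^r = K^{1-r/1^*}\bigl(K|u_n|^{1^*}\bigr)^{r/1^*}$ and using Hölder with exponents $1^*/(1^*-r)$ and $1^*/r$,
\begin{equation*}
\int_{B_R^c(0)\cap A_n}K|u_n|^r\,dx \le \left(\int_{B_R^c(0)\cap A_n}K\,dx\right)^{1-r/1^*}\left(\int_{B_R^c(0)\cap A_n}K|u_n|^{1^*}\,dx\right)^{r/1^*}.
\end{equation*}
The second factor is bounded by $(\|K\|_\infty|u_n|_{1^*}^{1^*})^{r/1^*}$ using $K\in L^\infty$ and the $L^{1^*}$ bound, while $|A_n|$ being uniformly bounded lets $(VK_1)$ make the first factor small uniformly in $n$ for $R$ large; adding the two pieces gives the tail bound.

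For the $(VK_3)$ case, with $q$ as in the hypothesis, the key idea is an interpolation. Setting $s:=(1^*-q)/(1^*-1)$ and $t:=(q-1)/(1^*-1)$ one checks $s+t=1$ and $s+1^*t=q$, so the pointwise identity $V^s|u_n|^q=(V|u_n|)^s(|u_n|^{1^*})^t$ together with Hölder at the conjugate exponents $1/s$, $1/t$ yields
\begin{equation*}
\int_{\mathbb{R}^N}V(x)^s|u_n|^q\,dx \le \left(\int_{\mathbb{R}^N}V(x)|u_n|\,dx\right)^s\left(\int_{\mathbb{R}^N}|u_n|^{1^*}\,dx\right)^t \le C.
\end{equation*}
Since $(VK_3)$ says exactly that $K/V^s\to 0$ as $|x|\to\infty$, writing $K=(K/V^s)V^s$ shows $\int_{B_R^c(0)}K|u_n|^q\,dx \le (\sup_{B_R^c(0)}K/V^s)\,C$, which tends to $0$ as $R\to\infty$ uniformly in $n$. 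This is the required tail bound, and notably it does not even invoke $(VK_1)$.

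To finish, I would combine the uniform tail bound with the local convergence: given $\varepsilon>0$, choose $R$ making the tail below $\varepsilon$ for all $n$, transfer the same bound to $u$ by Fatou's lemma (via the a.e.\ convergence), and split $\int_{\mathbb{R}^N}K|u_n-u|^r\,dx$ over $B_R(0)$ and $B_R^c(0)$; the inner integral vanishes as $n\to\infty$ since $K\in L^\infty$ and $u_n\to u$ in $L^r(B_R(0))$, while the outer one is controlled by a multiple of $\varepsilon$ through $|a-b|^r\le 2^{r-1}(|a|^r+|b|^r)$. Letting $\varepsilon\to0$ gives strong convergence in the weighted space. I expect the main obstacle to be exactly the uniform tail control in the $(VK_2)$ case, where one must orchestrate three separate mechanisms---the smallness from small values of $u_n$, the bounded measure of the super-level sets, and the vanishing of $K$ at infinity on such sets---so that every estimate stays uniform in $n$.
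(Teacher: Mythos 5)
Your proof is correct, and its skeleton is exactly the paper's: extract local compactness on balls (the paper phrases this as compactness of $E_r \hookrightarrow L^q(B_r(0))$, which is the Rellich--Kondrachov property of $BV$ that you invoke) and combine it with a tail estimate on $B_R^c(0)$ uniform in $n$. The differences are in how the tail is organized. Under $(VK_2)$ the paper compresses everything into one pointwise inequality,
$$K(x)|t|^q \leq \epsilon\, C\,(V(x)|t| + |t|^{1^*}) + C\, K(x)\,\chi_{[t_0,t_1]}(|t|)\,|t|^{1^*},$$
splitting the values of $|u_n|$ into three bands: below $t_0$ (absorbed by $V(x)|t|$ via $(VK_2)$), above $t_1$ (absorbed by $|t|^{1^*}$ via $K \in L^\infty$), and a middle band $[t_0,t_1]$ handled, as in your argument, by a Chebyshev bound on $|A_n|$ together with $(VK_1)$. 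You use instead a two-band split at a single level $\kappa$ and dispose of the entire super-level set by H\"older, putting $K \in L^\infty$ and the $L^{1^*}$ bound into one factor and $(VK_1)$ into the other; this trades the paper's upper truncation $t_1$ for an interpolation exponent, with the same uniformity bookkeeping (fix $\kappa$ first, then choose $R$). Under $(VK_3)$ your H\"older interpolation with $s = (1^*-q)/(1^*-1)$ and $t = (q-1)/(1^*-1)$ is the multiplicative twin of the paper's additive bound, obtained there by minimizing $g(t) = V(x)t^{1-q} + t^{1^*-q}$: both yield $\int_{\mathbb{R}^N} V(x)^{s}|u_n|^q dx \leq C$, after which the tail vanishes by $(VK_3)$ alone, without $(VK_1)$, exactly as you observe (and as in the paper). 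One point where your write-up is tighter than the paper's: in the final estimate \eqref{eq} the paper's local limit $u$ lives on a ball $B_r(0)$ whose radius depends on $\epsilon$ and is extended by zero, so strictly speaking a diagonal/uniqueness-of-limit argument is left implicit there; your upfront diagonal extraction of a single limit $u$ with $L^s_{loc}$ and a.e.\ convergence, plus Fatou to transfer the tail bound to $u$, closes that small gap cleanly.
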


\begin{proof}
Let $(u_n) \subset E_V$ a bounded sequence and let $C > 0$ be such that 
$$
\|u_n\| \leq C, \quad \forall n \in \mathbb{N}.
$$
Suppose first that $(VK_2)$ holds. Fixed $q \in
(1, 1^*)$ and $\epsilon > 0$, there exist $0 < t_0 <
t_1$ and a positive constant $C > 0$ such that
\begin{equation*}
K(x)|t|^q \leq \epsilon\,  C 	\,  (V(x) |t| + |t|^{1^*}) + C \, 
K(x) \, \chi_{[t_0, t_1]}(|t|) |t|^{1^*}, \quad\hbox{ for all $ t \in
\mathbb R$.}
\end{equation*}
Then, denoting $Q(u)= \displaystyle\int_{\mathbb{R}^N} V(x) |u|
dx + \displaystyle\int_{\mathbb{R}^N} |u|^{1^*} dx $ and
$A_n= \left\{ x \in \mathbb{R}^N: t_0 \leq |u_n(x)| \leq t_1   \right\}$,
we have that
\begin{equation}\label{2.5}
\int_{B_r(0)^c} K(x) |u_n|^q dx \leq \epsilon \,  C Q(u_n) + C
\int_{A_n \cap B_r(0)^c} K(x) dx, \quad\hbox{for all $n \in \mathbb{N}$.}
\end{equation}
By the boundedness of $(u_n)$ and the continuity of the embedding $E_V \hookrightarrow L^{1^*}(\mathbb{R}^N)$, there exists $C' > 0$ such that
$Q(u_n) \leq C''$ for all $n \in \mathbb{N}$. On the other hand it follows
that
$$ t_0^{1^*} |A_n| \leq \int_{A_n} |u_n|^{1^*} dx \leq C', \quad\hbox{for any $n \in \mathbb N$},$$
and then $\sup_{n \in \mathbb N} |A_n| < + \infty$. Consequently, from $(VK_1)$ there exists a positive radius $r > 0$ large enough such that
\begin{equation}\label{2.6}
\int_{A_n \cap B_r^c(0)} K(x) dx < \frac{\epsilon}{t_1^{1^*}} \quad\hbox{for all $n \in \mathbb N$}.
\end{equation}
By \eqref{2.5} and \eqref{2.6} it follows that
\begin{eqnarray}\label{2.7}
\int_{B_r(0)^c} K(x) |u_n|^q dx &\leq&CC''\epsilon 
+ C \int_{A_n \cap B_r(0)^c} K(x) dx \nonumber \\
&\leq &
(CC'' + C/{t_1^{1^*}}) \epsilon \quad\hbox{for all $n \in \mathbb N$}.
\end{eqnarray}
Denoting $E_r :=\left\{u \in BV(B_r(0)); \, \int_{B_r(0)}V(x)|u|dx < \infty\right\}$, since $\displaystyle \left(\left. u_n\right|_{B_R(0)}\right)_{n \in \mathbb{N}}$ is a bounded sequence in $E_r$, the compactness of the embedding $E_r \hookrightarrow L^q(B_r(0))$, for $q \in [1,1^*)$ and the boundedness of $K$ imply that there exists $u \in E_r$ such that
\begin{equation}\label{2.8}
\lim_{n \rightarrow + \infty } \int_{B_r(0)} K(x) |u_n|^q dx
= \int_{B_r(0)} K(x) |u|^q dx.
\end{equation}
Now, denoting by $u$ its own extension by zero in $B_r(0)^c$, note that $u \in E_V$ and, for $\epsilon > 0$, if $n$ is sufficiently large, by \eqref{2.7} and \eqref{2.8} it follows that
\begin{equation}
\int_{\mathbb R^N} K(x) |u_n - u|^q dx = \int_{B_r(0)} K(x) |u_n - u|^qdx + \int_{B_r(0)^c} K(x) |u_n|^q dx < \epsilon,
\label{eq}
\end{equation}
which concludes the proof in this case.

\medskip

Now suppose that $(VK_3)$ holds. Let us define for each $x \in \mathbb R^3$ fixed, the function
$$g(t)= V(x) t^{1-q} + t^{1^*-q}, \quad\hbox{for every $t > 0$}. $$
Since its minimum value is $C_q V(x)^\frac{1^*-q}{1^*-1}$ where 
$$C_q = \left(\frac{q-1}{1^*-q}\right)^\frac{1-q}{1^*-1} + \left(\frac{q-1}{1^*- q}\right)^\frac{1^*-q}{1^*-1},$$
we have that
$$C_q V(x)^\frac{1^*-q}{1^*-1} \leq V(x) t^{1 - q} + t^{1^*- q}, \quad\hbox{for every $x \in \mathbb R^N$ and $t > 0$}. $$
Combining this inequality  with $(VK_3)$, for any $\epsilon > 0$ 
there exists a positive radius $r > 0$ sufficiently large
such that
\begin{equation*}
 K(x) |t|^q \leq \epsilon \, C'_q (V(x) |t| + |t|^{1^*}), \quad\hbox{for every $t \in \mathbb R$ and $|x| > r$,}
 \end{equation*}
where $C'_q = C_q^{-1}$,  from which it follows that
$$ \displaystyle\int_{B_r^c(0)} K(x) |u_n|^q dx \leq \epsilon C'_q \displaystyle\int_{B_r^c(0)} (V(x) |u_n| + |u_n|^{1^*}) dx , \quad\hbox{for all $n \in \mathbb{N}$}. $$

But since $(u_n)$ is bounded in $E_V$ and also in $L^{1^*}(\mathbb{R}^N)$, it follows that there exists a constant $C'''> 0$ such that

\begin{eqnarray}\label{3.7}
\int_{B_r^c(0)} K(x) |u_n|^q dx \leq C''' \epsilon \quad \forall n \in \mathbb N.
\end{eqnarray}
Now (\ref{eq}) follows from (\ref{3.7}) in the same way as in the first case. This proves the result.
\end{proof}

\begin{proposition}
Assume that $(V,K) \in \mathcal{K}$ and $f$ satisfies $(f_1)-(f_3)$. Let $(u_n)\subset E_V$ be a bounded sequence such that $u_n \to u$ in $L^r_K(\mathbb{R}^N)$ for all $r \in (1,1^*)$ if $(VK_2)$ holds and for $r = q$ if $(VK_3)$ holds. then
$$
\lim_{n \to \infty}\int_{\mathbb{R}^N}F(u_n)dx = \int_{\mathbb{R}^N}F(u)dx
$$
and
$$
\lim_{n \to \infty}\int_{\mathbb{R}^N}f(u_n)u_ndx = \int_{\mathbb{R}^N}f(u)udx.
$$
\label{propositioncompactnessf}
\end{proposition}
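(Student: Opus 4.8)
The plan is to establish both limits by a single scheme: first reduce everything to a pointwise growth estimate on $F$ and on $s\mapsto f(s)s$, then split $\mathbb{R}^N$ into a large ball and its complement and treat the two pieces by different mechanisms. As a first step I would record, from $(f_1)$--$(f_3)$, the bounds
$$|F(s)| \le \epsilon |s|^{1^*} + C_\epsilon |s|^r, \qquad |f(s)s| \le \epsilon |s|^{1^*} + C_\epsilon |s|^r,$$
valid for every $s \in \mathbb{R}$ and every $\epsilon>0$, where $r$ is the exponent in the convergence hypothesis (any $r\in(1,1^*)$ under $(VK_2)$, and $r=q$ under $(VK_3)$). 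Indeed $(f_2)$ gives $|F(s)|\le\tfrac{\epsilon}{2}|s|^2\le \epsilon|s|^r$ near the origin (since $r<1^*\le 2$), $(f_3)$ gives $|F(s)|\le \epsilon|s|^{1^*}$ near infinity, and on the compact annulus $\{t_0\le |s|\le t_1\}$ the continuous function $F$ is bounded and hence absorbed into the $|s|^r$ term; the same reasoning applies to $f(s)s$. In particular these estimates, together with the uniform $L^{1^*}$ bound on $(u_n)$ coming from the Sobolev inequality \eqref{sobolevinequality}, guarantee that all the integrals in question are finite.

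Next I would pass to a subsequence along which $u_n\to u$ almost everywhere, which is available from $u_n\to u$ in $L^r_K(\mathbb{R}^N)$. Fixing a large radius $R>0$, on $B_R(0)$ the weight $K$ is continuous and strictly positive, hence bounded below by a positive constant on $\overline{B_R(0)}$; therefore the $L^r_K$-convergence restricts to ordinary $L^r(B_R(0))$-convergence. Combining the almost everywhere convergence with the growth bound and a generalized dominated (Vitali) convergence argument -- the $\epsilon|u_n|^{1^*}$ part being controlled by the uniform $L^{1^*}$ bound and the $C_\epsilon|u_n|^r$ part by the strong $L^r(B_R(0))$ convergence -- yields
$$\int_{B_R(0)} F(u_n)\,dx \to \int_{B_R(0)} F(u)\,dx, \qquad \int_{B_R(0)} f(u_n)u_n\,dx \to \int_{B_R(0)} f(u)u\,dx.$$

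The main obstacle is the tail: one must show that $\int_{B_R(0)^c} F(u_n)\,dx$ and $\int_{B_R(0)^c} f(u_n)u_n\,dx$ are uniformly small for $R$ large. By the growth estimate the $\epsilon|u_n|^{1^*}$ contribution is immediately $\le C\epsilon$ from the $L^{1^*}$ bound, so the whole difficulty concentrates on the $|u_n|^r$ contribution over $B_R(0)^c$. This is exactly the quantity controlled in the proof of Proposition \ref{propositioncompactness}: the tail bounds \eqref{2.7} under $(VK_2)$ and \eqref{3.7} under $(VK_3)$ provide, uniformly in $n$, the required smallness of $\int_{B_R(0)^c} K(x)|u_n|^r\,dx$. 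It is here that the vanishing-potential hypotheses and the weight $K$ enter decisively, and the integrands in the statement should accordingly be understood against the weight $K$ (in accordance with the definition of $\mathcal{F}$ in \eqref{F}), since over a region where $K$ degenerates the bare tail could not be tamed by the available hypotheses. Granting this, letting first $n\to\infty$ and then $\epsilon\to 0$ with $R=R(\epsilon)$ combines the ball and tail estimates, and the proof concludes precisely as in \eqref{eq}.
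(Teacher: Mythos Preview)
Your proposal is correct and matches the argument the paper defers to in \cite{AlvesSouto}[Lemma~2.2]: the paper offers no proof of its own here, only that reference, and the standard Alves--Souto scheme is precisely the growth-estimate/ball-versus-tail decomposition you outline, with the tail handled by the uniform estimates \eqref{2.7} and \eqref{3.7} from Proposition~\ref{propositioncompactness}. Two minor remarks. First, you correctly observe that the integrals must carry the weight $K(x)$, in line with the definition of $\mathcal{F}$ in \eqref{F} and with every application of the result in the proof of Theorem~\ref{theoremapplication}; the printed statement omits $K$, but your reading is the intended one. Second, after passing to an a.e.\ convergent subsequence you should invoke the usual sub-subsequence argument to upgrade to convergence of the full sequence; this is routine and does not affect the structure of the proof.
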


The last result can be proved following the same arguments that in \cite{AlvesSouto}[Lemma 2.2].

For later use, let us prove a slight modified version of Lemma \ref{semicontinuity}, which have the convergence in weighted Lebesgue spaces as an assumption.
\begin{lemma}
Let $(u_n) \in E_V$ be a bounded sequence such that $u_n \to u$ in $L_K^{r}(\mathbb{R}^N)$, for some $1 < r < 1^*$. Then it follows that $u \in E_V$ and
$$\int_{\mathbb{R}^N} |Du| \leq \liminf_{n \to \infty} \int_{\mathbb{R}^N} |Du_n|.$$
\label{semicontinuityK}
\end{lemma}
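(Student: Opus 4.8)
The plan is to reduce everything to the argument already used in Lemma \ref{semicontinuity}, the only new ingredient being the passage from the weighted convergence $u_n \to u$ in $L^r_K(\mathbb{R}^N)$ to local $L^1$ convergence. The key observation is that, by $(VK_0)$, $K$ is continuous and strictly positive, so on any fixed ball $B_R(0)$ it is bounded below by the constant $c_R = \min_{\overline{B_R(0)}}K > 0$. Hence for any test field $\phi \in C^1_c(\mathbb{R}^N,\mathbb{R}^N)$ whose support lies in $B_R(0)$,
\begin{equation*}
\int_{B_R(0)}|u_n - u|^r\,dx \leq c_R^{-1}\int_{B_R(0)}K(x)|u_n - u|^r\,dx \to 0,
\end{equation*}
and since $B_R(0)$ has finite measure and $r > 1$, H\"older's inequality upgrades this to $u_n \to u$ in $L^1(B_R(0))$, and in particular in $L^1(\mbox{supp}(\phi))$.

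With this local $L^1$ convergence in hand, I would repeat verbatim the computation of Lemma \ref{semicontinuity}: for $\phi$ as above with $|\phi|_\infty \leq 1$,
\begin{equation*}
\int_{\mathbb{R}^N} u\,\mbox{div}\,\phi\,dx = \lim_{n\to\infty}\int_{\mbox{supp}(\phi)} u_n\,\mbox{div}\,\phi\,dx \leq \liminf_{n\to\infty}\int_{\mathbb{R}^N}|Du_n|,
\end{equation*}
and then take the supremum over all admissible $\phi$ to obtain $\int_{\mathbb{R}^N}|Du| \leq \liminf_n \int_{\mathbb{R}^N}|Du_n| < \infty$. Note that, because the local $L^1$ convergence holds for the whole sequence, this inequality is obtained along the full sequence, with no passage to a subsequence.

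It remains to check that $u$ genuinely lies in $E_V$, i.e.\ that $u \in L^{1^*}(\mathbb{R}^N)$ and $\int_{\mathbb{R}^N}V(x)|u|\,dx < \infty$. Since $(u_n)$ is bounded in $E_V$, boundedness in $E_V$ gives $\int_{\mathbb{R}^N}V(x)|u_n|\,dx \leq C$, while the Sobolev inequality \eqref{sobolevinequality} shows that $(u_n)$ is also bounded in $L^{1^*}(\mathbb{R}^N)$. Passing to a subsequence by a diagonal argument over the balls $B_m(0)$, the local $L^1$ convergence yields $u_n \to u$ a.e.\ in $\mathbb{R}^N$, so Fatou's Lemma gives simultaneously $\int_{\mathbb{R}^N}|u|^{1^*}dx \leq \liminf_n \int_{\mathbb{R}^N}|u_n|^{1^*}dx < \infty$ and $\int_{\mathbb{R}^N}V(x)|u|\,dx \leq \liminf_n \int_{\mathbb{R}^N}V(x)|u_n|\,dx \leq C$; hence $u \in E_V$.

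I expect the only delicate point to be this first passage from the weighted convergence to local $L^1$ convergence; once the lower bound on $K$ over compact sets is established, the lower semicontinuity of the total variation is a direct transcription of Lemma \ref{semicontinuity}, and membership in $E_V$ follows from two routine applications of Fatou's Lemma together with \eqref{sobolevinequality}.
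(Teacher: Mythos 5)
Your proof is correct and follows essentially the same route as the paper: the paper likewise invokes $(VK_0)$ to pass from convergence in $L^r_K(\mathbb{R}^N)$ to convergence in $L^r(\mbox{supp}(\phi))$ and hence in $L^1(\mbox{supp}(\phi))$, then repeats verbatim the test-field computation of Lemma \ref{semicontinuity} and uses Fatou's Lemma for membership of $u$. Your version is in fact slightly more complete, since you spell out the lower bound on $K$ over compact sets, the a.e.\ convergence via a diagonal subsequence, and the verification that $\int_{\mathbb{R}^N}V(x)|u|\,dx < \infty$, a point the paper's proof leaves implicit.
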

\begin{proof}
First of all note that by Fatou Lemma it follows that $u \in L^{1^*}(\mathbb{R}^N)$. Now, let $\phi \in C^1_c(\mathbb{R}^N,\mathbb{R}^N)$ such that $|\phi|_\infty \leq 1$. By $(VK_0)$, $u_n \to u$ in $L^r(supp(\varphi))$ and as a consequence, in $L^1(supp(\varphi))$. Then
$$\int_{\mathbb{R}^N}u \mbox{div}\phi dx = \int_{supp(\phi)}u \mbox{div}\phi dx = \lim_{n \to +\infty}\int_{supp(\phi)}u_n \mbox{div}\phi dx \leq \liminf_{n \to \infty}\int_{\mathbb{R}^N}|Du_n|.$$
Then the result follows by taking the supremum over all such $\phi$.
\end{proof}

\section{Proof of Theorem \ref{theoremapplication}}

\begin{proof}[Proof of Theorem \ref{theoremapplication}]
First of all let us prove that the restriction of $\Phi$ to the Banach space $E_V$ satisfies condition $i)$ of the Mountain Pass Theorem. But before, just note that by $(f_2)$ and $(f_3)$ it follows that for all $\eta > 0$, there exists $A_\eta > 0$ such that
\begin{equation}
|F(s)| \leq \eta|s| + A_\eta|s|^{1^*}, \quad \forall s \in \mathbb{R}.
\label{fepsilon}
\end{equation}

Note that, by (\ref{fepsilon}), $(VK_0)$ and the fact that $E_V \hookrightarrow L^{1^*}(\mathbb{R}^N)$, it follows that
\begin{eqnarray*}
\Phi(u)& = &\int_{\mathbb{R}^N} |Du| + \int_{\mathbb{R}^N} V(x)|u| dx - \int_{\mathbb{R}^N} K(x)F(u)dx\\
& \geq & \|u\| - C_1\eta  \int_{\mathbb{R}^N} V(x)|u| dx - C_1 A_\eta|u|_{1^*}^{1^*}\\
& \geq & (1-C_1\eta)\|u\| - C_2\|u\|^{1^*}\\
& \geq & \alpha,
\end{eqnarray*}
for all $u \in E_V$, such that $\|u\| = \rho$, where $0 < \eta < 1/{C_1}$ is fixed, $\displaystyle 0 < \rho < \left(\frac{1-C_1\eta}{C_2}\right)^\frac{1}{1^*-1}$ and $\displaystyle \alpha = \rho(1 - C_1\eta - C_2\rho^{1^*-1})$.

Now let us prove that $\Phi$ satisfies the condition $ii)$ of Theorem \ref{mountainpass}.
First note that condition $(f_4)$ implies that there exists constants $d_1,d_2 > 0$ such that
\begin{equation}
F(s) \geq d_1|s|^\theta - d_2 ,\quad \forall s \in \mathbb{R}.
\label{Festimate}
\end{equation}

If $u \in E_V$, $\mbox{supp}(u)$ is compact, $u \neq 0$ and $t > 0$, then
$$
\Phi(tu) \leq t\|u\| - d_1|K|_\infty t^\theta |u|_\theta^\theta + d_2|K|_\infty|\mbox{supp}(u)| \to -\infty,
$$
as $t \to +\infty$, since $\theta > 1$.

Then, Theorem \ref{mountainpass} implies that there exists $(u_n)\subset E_V$ and $\epsilon_n \to 0$ such that 
$$
\Phi_n(u_n) \to c, \quad \mbox{as $n \to +\infty$}
$$
and
\begin{equation}
\mathcal{J}(v) - \mathcal{J}(u_n) \geq \int_{\mathbb{R}^N}K(x)f(u_n)(v - u_n)dx - \epsilon_n\|v - u_n\|, \quad \forall v \in E_V.
\label{mountainpasssequence1}
\end{equation}

Let us prove that the sequence $(u_n)$ is bounded in $E_V$. In (\ref{mountainpasssequence1}), let us take as test function $v = 2u_n$ and note that
$$
\|u_n\| \geq \int_{\mathbb{R}^N}K(x)f(u_n)u_ndx + \epsilon_n\|u_n\|,
$$
which implies that
\begin{equation}
(1 - \epsilon_n)\|u_n\| \geq \int_{\mathbb{R}^N}K(x)f(u_n)u_ndx.
\label{eqn1}
\end{equation}
Then, by $(f_4)$ and (\ref{eqn1}), note that
\begin{eqnarray*}
c + o_n(1) & \geq & \Phi(u_n)\\
& = & \|u_n\| + \int_{\mathbb{R}^N}K(x)\left(\frac{1}{\theta}f(u_n)u_n - F(u_n)\right)dx - \int_{\mathbb{R}^N}K(x)\frac{1}{\theta}f(u_n)u_ndx\\
& \geq & \|u_n\|\left(1- \frac{1}{\theta} + \frac{\epsilon_n}{\theta}\right)\\
& \geq & C\|u_n\|,
\end{eqnarray*}
for some $C > 0$ uniform in $n \in \mathbb{N}$. Then it follows that $(u_n)$ is bounded.

By the boundedness of $(u_n) \subset E_V$ and Proposition \ref{propositioncompactness}, it follows that there exists $u \in E_V$ such that $u_n \to u$ in $L^r_K(\mathbb{R}^N)$ for all $1 < r < 1^*$ if $(VK_2)$ holds and in $L^q(\mathbb{R}^N)$ if $(VK_3)$ holds . By Lemma \ref{semicontinuityK}, it follows that $u \in E_V$.
By the boundedness of $(u_n)$ in $E_V$, Proposition \ref{propositioncompactnessf} and Lemma \ref{semicontinuityK} it follows calculating the $\liminf$ both sides of (\ref{mountainpasssequence1}) that
\begin{equation}
\mathcal{J}(v) - \mathcal{J}(u) \geq \int_{\mathbb{R}^N} K(x)f(u)(v-u), \quad \forall v \in E_V.
\label{BVsolution1}
\end{equation}
Hence, $u$ is a bounded variation solution of (\ref{Pintro}).

Moreover, by taking $v = u + tu$ for $t > 0$ in (\ref{BVsolution1}), note that
$$
\frac{\mathcal{J}(u + tu) - \mathcal{J}(u)}{t} \geq \int_{\mathbb{R}^N}K(x)f(u)udx.
$$
Passing the limit as $t \to 0^+$, it follows that
$$
\mathcal{J}'(u)u \geq \int_{\mathbb{R}^N}K(x)f(u)udx.
$$
By doing the same for $t < 0$ and passing the limit as $t \to 0^-$, it follows that
$$
\mathcal{J}'(u)u \leq \int_{\mathbb{R}^N}K(x)f(u)udx.
$$
Then, by the last inequalities and (\ref{derivadaJ}),
\begin{equation}
\mathcal{J}(u) = \mathcal{J}'(u)u = \int_{\mathbb{R}^N}K(x)f(u)udx.
\label{A1}
\end{equation}

Applying the same arguments with $v = u_n + tu_n$ in (\ref{mountainpasssequence1}), for $t > 0$ and $t < 0$ and by making $t \to 0^+$ and $t \to 0^-$, it follows that
\begin{equation}
\mathcal{J}(u_n) = \mathcal{J}'(u_n)u_n = \int_{\mathbb{R}^N}K(x)f(u_n)u_ndx + o_n(1).
\label{A2}
\end{equation}
Then, from (\ref{A1}), (\ref{A2}) and Proposition \ref{propositioncompactnessf}, it follows that
\begin{eqnarray*}
\mathcal{J}(u) & = & \int_{\mathbb{R}^N}K(x)f(u)udx\\
& = & \lim_{n \to \infty}\int_{\mathbb{R}^N}K(x)f(u_n)u_ndx\\
& = & \lim_{n \to \infty} \mathcal{J}(u_n).
\end{eqnarray*}

Finally, the last equality and Proposition \ref{propositioncompactnessf} imply that
$$c = \lim_{n \to \infty}\Phi(u_n) = \Phi(u)$$
and then $u \neq 0$.

Now, what is left to justify is just that the solution $u$ in fact is a ground-state solution, i.e., that $u$ has the lowest energy level among all nontrivial bounded variation solutions. In order to prove it, we have to recall \cite{FigueiredoPimenta}, where is proved that we can define the Nehari set associated to $\Phi$, given by
$$
\mathcal{N} = \left\{u \in E_V\backslash\{0\}; \, \int_{\mathbb{R}^N}|Du| + \int_{\mathbb{R}^N}V(x)|u|dx = \int_{\mathbb{R}^N}f(u)udx   \right\}.
$$
It is proven in \cite{FigueiredoPimenta} that $\mathcal{N}$ is a set which contains all nontrivial bounded variation solutions of (\ref{Pintro}). Then, if we manage to prove that the solution $u$ is such that $\Phi(u) = \inf_{\mathcal{N}}\Phi$, then $u$ would be a ground-state solution of (\ref{Pintro}).

By using the same kind of arguments that Rabinowitz in \cite{Rabinowitz}, in the light of $(f_1) - (f_5)$, one can easily see that $\mathcal{N}$ is radially homeomorphic to the unit sphere in $E_V$ and also that the minimax level $c$ satisfies
$$
c = \inf_{v \in E_V\backslash\{0\}}\max_{t \geq 0} \Phi(tv) = \inf_{v \in \mathcal{N}}\Phi(v).
$$
Since the solution $u$ is such that $\Phi(u) = c$, it follows that $u$ is a ground-state bounded variation solution and this concludes the proof of Theorem \ref{theoremapplication}.

The fact that $u$ satisfies the Euler-Lagrange equation (\ref{eulerlagrangeintro}) just follows from the Subsection \ref{eulerlagrangesection}.

\end{proof}

\section{Appendix}

\hspace{.5cm} In this appendix we state and prove some important results we have used in the last sections.

\begin{theorem}
Let $E$ be a Banach space, $\Phi = I_0 - I$ where $I \in C^1(E,\mathbb{R})$ and $I_0$ a locally Lipschitz convex functional defined in $E$.
Suppose that the functional $\Phi$ satisfies:
\begin{itemize}
\item [$i)$] There exist $\rho > 0$, $\alpha > \Phi(0)$ such that $\displaystyle \Phi|_{\partial B_\rho(0)} \geq \alpha$,
\item [$ii)$] $\Phi(e) < \Phi(0)$ for some $e \in E \backslash \overline{B_\rho(0)}$,
\end{itemize}
Then for all $\epsilon > 0$ there exists $x_\epsilon \in E$ such that 
\begin{equation}
c - \epsilon < \Phi(x_\epsilon) < c+\epsilon,
\label{MPintro}
\end{equation}
where $c \geq \alpha$ is characterized by
\begin{equation}
c = \inf_{\gamma \in \Gamma} \sup_{t \in [0,1]}\Phi(\gamma(t)),
\label{minimax}
\end{equation}
where $\Gamma = \{\gamma \in C^0([0,1],E); \, \gamma(0) = 0 \, \,  \mbox{and} \, \,  \gamma(0) = e\}$
and
\begin{equation}
I_0(y) - I_0(x_\epsilon) \geq I'(x_\epsilon)(y - x_\epsilon) - \epsilon \|y-x_\epsilon\|, \quad \forall y \in E.
\label{MP2}
\end{equation}
\label{mountainpass}
\end{theorem}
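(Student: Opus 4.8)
The plan is to argue by contradiction, producing the approximate critical point $x_\epsilon$ by a deformation argument that plays the role usually reserved for the Palais--Smale condition. Throughout write $\Psi(\gamma) = \max_{t \in [0,1]} \Phi(\gamma(t))$ for $\gamma \in \Gamma$, so that $c = \inf_{\gamma \in \Gamma} \Psi(\gamma)$.

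First I would record the elementary facts about $c$. Since $\Phi = I_0 - I$ is locally Lipschitz it is continuous, so $\Psi$ is finite on the nonempty set $\Gamma$ (which contains the segment $t \mapsto te$), and every $\gamma \in \Gamma$ joins $0 \in B_\rho(0)$ to $e \notin \overline{B_\rho(0)}$, hence meets $\partial B_\rho(0)$; by $i)$ this gives $\Psi(\gamma) \geq \alpha$ and therefore $c \geq \alpha > \Phi(0) > \Phi(e)$, the last inequality by $ii)$. I then fix $\epsilon > 0$ small enough that $c - \epsilon > \max\{\Phi(0), \Phi(e)\}$; it suffices to produce $x_\epsilon$ for such $\epsilon$, since the conclusion for a smaller parameter implies it for a larger one.

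Next comes the heart of the matter: extracting a descent direction from the failure of (MP2). Suppose, for contradiction, that no $x$ with $c - \epsilon < \Phi(x) < c + \epsilon$ satisfies (MP2). Then at each such $x$ there is $y \in E$, and hence a nonzero $v := y - x$, with $I_0(x + v) - I_0(x) < I'(x)v - \epsilon\|v\|$. This is exactly where the structure $\Phi = I_0 - I$, with $I_0$ convex and $I$ of class $C^1$, is decisive: because (MP2) is tested against genuine elements $y \in E$, the descent direction lives in $E$ and no separation argument in the bidual is needed, which is crucial since $E$ is not assumed reflexive. Using convexity, $I_0'(x;v) \le I_0(x+v) - I_0(x)$, so the one-sided directional derivative satisfies $\Phi'(x;v) = I_0'(x;v) - I'(x)v < -\epsilon\|v\|$; normalizing, the unit vector $w = v/\|v\|$ obeys $\Phi'(x;w) < -\epsilon$. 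Moreover this inequality is stable: by continuity of $I_0$ and $I'$ together with the strict finite-difference bound, there is a ball $U_x$ around $x$ on which the same $w$ gives $\Phi'(\,\cdot\,;w) < -\epsilon/2$.

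The remaining steps are the construction of the deformation and the contradiction. Covering the band $\{c-\epsilon < \Phi < c+\epsilon\}$ by such balls $U_{x_i}$ with associated unit directions $w_i$, I would take a locally Lipschitz partition of unity $\{\lambda_i\}$ subordinate to the cover and set $W = \sum_i \lambda_i w_i$, so that $\|W\| \le 1$; here the sublinearity of the convex directional derivative $I_0'(x;\cdot)$ yields $\Phi'(x;W(x)) \le \sum_i \lambda_i(x)\,\Phi'(x;w_i) < -\epsilon/2$ on the band, which is precisely the substitute for a pseudo-gradient in this non-smooth setting. After multiplying $W$ by a Lipschitz cut-off $\chi(\Phi(\cdot))$ equal to $1$ on $[c-\epsilon/2, c+\epsilon/2]$ and vanishing outside $(c-\epsilon, c+\epsilon)$, the field is bounded, locally Lipschitz and globally defined, so its flow $\eta(s,\cdot)$ exists for all $s \ge 0$. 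Along the flow $\frac{d^+}{ds}\Phi(\eta(s,x)) = \Phi'(\eta; W(\eta)) < -\epsilon/2$ wherever $\chi = 1$, using the one-sided identity $\frac{d^+}{ds}I_0(\eta) = I_0'(\eta;\dot\eta)$; thus $\Phi$ is nonincreasing along $\eta$ and strictly decreases at rate $\ge \epsilon/2$ inside the core band, so for $s = 2$ every point with $\Phi \le c + \epsilon/2$ is carried into $\{\Phi \le c - \epsilon/2\}$, while the endpoints $0$ and $e$ are fixed because $\Phi(0), \Phi(e) < c-\epsilon$ forces $\chi = 0$ there. Choosing $\gamma \in \Gamma$ with $\Psi(\gamma) < c + \epsilon/2$ and setting $\tilde\gamma = \eta(2,\cdot)\circ\gamma$, we obtain $\tilde\gamma \in \Gamma$ with $\Psi(\tilde\gamma) \le c - \epsilon/2 < c$, contradicting $c = \inf_\Gamma \Psi$. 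This contradiction produces an $x_\epsilon$ satisfying both (MP1) and (MP2). I expect the genuinely delicate points to be the verification that the flow decreases $\Phi$ at the claimed rate for the merely locally Lipschitz $\Phi$, and the bookkeeping guaranteeing that the endpoints and the sub-band levels remain below $c$ throughout the deformation.
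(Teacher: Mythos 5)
Your proposal is correct, and while it follows the same overall skeleton as the paper's proof (negate the conclusion on a band around the level $c$, build a descent vector field there, run its flow to push a quasi-optimal path below $c$, and contradict the minimax characterization of $c$), the mechanism by which you obtain the descent field is genuinely different. The paper works on the dual side: it first shows, via Szulkin's separation lemma (Lemma 1.3 of \cite{Szulkin}), that (\ref{MP2}) is equivalent to the functional-analytic form (\ref{MP3}), so that failure of (\ref{MP2}) on the band yields the uniform lower bound $\beta(x)=\min\{\|w^*\|_{E^*};\ w^*\in\partial I_0(x)-I'(x)\}\geq\epsilon$ on the generalized gradient; it then invokes Chang's pseudo-gradient lemma for locally Lipschitz functionals, together with Chang's results on a.e.\ differentiability of $t\mapsto T(\sigma(t,x))$ and the generalized-gradient chain-rule inequality, to build and control the flow (Theorem \ref{deformationlemma}). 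You stay entirely on the primal side: failure of (\ref{MP2}) at $x$ directly produces, via convexity ($I_0'(x;v)\leq I_0(x+v)-I_0(x)$) and positive homogeneity, a unit direction $w$ with $\Phi'(x;w)<-\epsilon$; the strict finite-difference inequality plus continuity of $I'$ makes this stable on a ball; and the sublinearity of $I_0'(x;\cdot)$ glues the local directions through a locally Lipschitz partition of unity into a field with $\Phi'(\cdot;W)<-\epsilon/2$, with the elementary one-sided chain rule for convex functions along $C^1$ curves replacing Chang's machinery. Your route buys self-containedness (no separation lemma, no generalized gradients, no external citations) by exploiting the specific structure $\Phi=I_0-I$; the paper's route buys a deformation lemma valid for an arbitrary locally Lipschitz $T$, reusable beyond the convex-plus-smooth setting, which is why the authors state it separately.

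One technical point you should tighten. As written, your cut-off $\chi(\Phi(\cdot))$ vanishes only outside the \emph{open} band $(c-\epsilon,c+\epsilon)$ on which $W$ is defined, and since the local Lipschitz constant of $W$ coming from the partition of unity may blow up near the boundary of that band, the field $\chi(\Phi)W$ extended by zero need not be locally Lipschitz at points with $\Phi(x)=c\pm\epsilon$ (the smallness of $\chi$ does not control the growth of the Lipschitz constant of $W$). The fix is immediate: support $\chi$ in a strictly smaller closed sub-band, say $[c-3\epsilon/4,\,c+3\epsilon/4]$, keeping $\chi\equiv 1$ on $[c-\epsilon/2,\,c+\epsilon/2]$; then every point of $E$ either lies in the open band (where the product of locally Lipschitz functions with $\|W\|\leq 1$ is locally Lipschitz) or has a neighborhood on which the field vanishes identically, and the global flow, the decrease estimate over $s\in[0,2]$, and the fixed endpoints $0$ and $e$ all go through exactly as you describe. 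This is the same boundary bookkeeping that the paper's function $\psi$ handles in its proof of Theorem \ref{deformationlemma}, so it is a detail to record, not a gap in your approach.
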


Before we start proving Theorem \ref{mountainpass}, let us prove that condition (\ref{MP2}) is equivalent to the existence of $z_\epsilon \in E^*$ such that $\|z_\epsilon\|_* \leq \epsilon$ and
\begin{equation}
I_0(y) - I_0(x_\epsilon) \geq I'(x_\epsilon)(y - x_\epsilon) + \langle z_\epsilon,y-x_\epsilon\rangle_{E^*,E}, \quad \forall y \in E,
\label{MP3}
\end{equation}
where $\langle \cdot,\cdot \rangle_{E^*,E}$ denotes the duality pair between $E$ and its dual.

In fact, clearly (\ref{MP3}) implies (\ref{MP2}).
In order to prove that (\ref{MP2}) also imply (\ref{MP3}), let us state a lemma proved by Szulkin in \cite{Szulkin}[Lemma 1.3].

\begin{lemma}
Let $E$ be a Banach space and $\chi:E \to (-\infty,+\infty]$ a lower semicontinuous convex function such that $\chi(0) = 0$. If 
$$\chi(x) \geq -\|x\|, \quad \forall x \in E,$$
then there exists $z \in E^*$, $\|z\|_* \leq 1$, such that
$$\chi(x) \geq \langle z,x \rangle_{E^*,E}, \quad \forall x \in E.$$
\label{lemmaszulkin}
\end{lemma}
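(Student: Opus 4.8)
The plan is to recognize that the sought $z$ is nothing but an element of the subdifferential of $\chi$ at the origin: since $\chi(0)=0$, the conclusion $\chi(x) \ge \langle z,x\rangle_{E^*,E}$ for all $x$ is precisely the statement $z \in \partial\chi(0)$. Thus the lemma asserts that $\partial\chi(0)$ is nonempty and meets the closed unit ball of $E^*$, the coercivity-type bound $\chi(x) \ge -\|x\|$ being exactly what forces the norm control $\|z\|_* \le 1$. I would establish this by a geometric Hahn--Banach separation argument carried out in the product space $E \times \mathbb{R}$.

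First I would introduce the two convex sets
\[
A = \mbox{epi}(\chi) = \{(x,t) \in E \times \mathbb{R}; \, t \ge \chi(x)\}, \qquad D = \{(x,t) \in E \times \mathbb{R}; \, t + \|x\| < 0\}.
\]
Here $A$ is convex because $\chi$ is convex, while $D$ is open, convex and nonempty (it contains $(0,-1)$). The hypothesis $\chi(x) \ge -\|x\|$ guarantees $A \cap D = \emptyset$, since any $(x,t) \in A$ satisfies $t \ge \chi(x) \ge -\|x\|$, i.e. $t + \|x\| \ge 0$. As one of the two disjoint convex sets is open, the separation theorem yields a nonzero continuous functional on $E \times \mathbb{R}$, which I write as $(x,t) \mapsto \langle w,x\rangle_{E^*,E} + \beta t$ with $w \in E^*$ and $\beta \in \mathbb{R}$, together with $\alpha \in \mathbb{R}$ such that $\langle w,x\rangle + \beta t \ge \alpha$ on $A$ and $\le \alpha$ on $D$.

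Next I would pin down the constants. Testing at $(0,0) \in A$ and at the points $(0,-\e) \in D$ and letting $\e \to 0^+$ forces $\alpha = 0$. The vertical ray $(0,t) \in A$, $t > 0$, gives $\beta \ge 0$, and $\beta = 0$ is impossible: if $\beta = 0$ then $\langle w,y\rangle \le 0$ would hold for every $y \in E$ (because $(y,s) \in D$ once $s$ is small enough), forcing $w = 0$ and hence the whole functional to vanish, a contradiction. Therefore $\beta > 0$, and after dividing by $\beta$ I may assume $\beta = 1$. Evaluating $\langle w,x\rangle + t \ge 0$ along $t = \chi(x)$ (for $x$ in the finite domain of $\chi$, the remaining $x$ being trivial) gives $\chi(x) \ge \langle -w,x\rangle$, so $z := -w$ satisfies $\chi(x) \ge \langle z,x\rangle_{E^*,E}$ for all $x$. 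Finally, letting $s \to (-\|y\|)^-$ on the boundary of $D$ yields $\langle w,y\rangle \le \|y\|$ for all $y$; replacing $y$ by $-y$ gives $|\langle z,y\rangle| \le \|y\|$, that is $\|z\|_* \le 1$.

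The one step demanding genuine care is the verification that the separating hyperplane is non-vertical, i.e. $\beta \ne 0$; this is where the nontriviality of the separating functional and the fact that $D$ contains a downward vertical ray through every point must be used together, as the whole argument collapses if the hyperplane is allowed to be vertical. By contrast, the lower semicontinuity of $\chi$ (closedness of the epigraph) is not actually needed for this conclusion, since one of the two separated sets is open and the weak form of Hahn--Banach applies; the hypothesis is retained only because it is the natural setting in which $\partial\chi$ is the relevant object.
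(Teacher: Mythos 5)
Your proof is correct, and every step checks out: $A=\mbox{epi}(\chi)$ and $D=\{(x,t):\ t+\|x\|<0\}$ are disjoint nonempty convex sets with $D$ open, the Eidelheit form of Hahn--Banach applies, and your normalization ($\alpha=0$ via $(0,0)\in A$ and $(0,-\varepsilon)\in D$, then $\beta>0$ via the upward ray in $A$ and the downward rays in $D$) correctly rules out a vertical separating hyperplane, which is indeed the one delicate point. The passage to the norm bound, $\langle w,y\rangle\leq\|y\|$ by letting $s\to(-\|y\|)^-$ and then testing at $\pm y$, is also sound, as is the dispatch of points outside $\mbox{dom}(\chi)$.

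There is, however, nothing in the paper to compare this against: the authors do not prove the lemma at all, but quote it verbatim as Lemma 1.3 of Szulkin's paper cited as \cite{Szulkin}. So your argument supplements rather than parallels the text. For what it is worth, your route is essentially the classical one --- identifying the sought $z$ as an element of $\partial\chi(0)$ and producing it by separating the epigraph from the open convex cone $\{t<-\|x\|\}$ is the standard proof of this statement, in the spirit of Szulkin's original argument. Your closing observation is also accurate and worth keeping: since one of the two convex sets is open, the weak geometric Hahn--Banach theorem suffices and the lower semicontinuity of $\chi$ (closedness of the epigraph) is never used; it is carried along in the hypotheses only because that is the setting in which Szulkin's framework, and its use in the Appendix of this paper via (\ref{MP2})--(\ref{MP3}), places the functional.
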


Now, if (\ref{MP2}) holds, then
$$\frac{1}{\epsilon}\left(I_0((y-x_\epsilon)+x_\epsilon) - I_0(x_\epsilon) - I'(x_\epsilon)(y - x_\epsilon)\right) \geq -\|y-x_\epsilon\|,$$
for all $y \in E$. By applying Lemma \ref{lemmaszulkin} to
$$\chi(x) = \frac{1}{\epsilon}\left(I_0((x + x_\epsilon) - I_0(x_\epsilon) - I'(x_\epsilon)x\right)$$
it follows that there exists $z \in E^*$, such that $\|z\|_* \leq 1$ and
$$\chi(x) \geq \langle z,x \rangle_* \quad \forall x \in E.$$
Taking $z_\epsilon = \epsilon z$ and $x = y-x_\epsilon$ where $y \in E$, it follows (\ref{MP3}) for $z_\epsilon$ and $\|z_\epsilon\|_* \leq \epsilon$.

To proceed with the proof, we need a version of Deformation Lemma without the Palais-Smale condition which has been proved in \cite{FigueiredoPimenta}[Theorem 4]. By the sake of completeness we state and prove it again here.
\begin{theorem}[Deformation lemma]

Let $E$ be a Banach space and $T:E \to \mathbb{R}$ a locally Lipschitz functional. When $a \in \mathbb{R}$, let us denote $T_a = \{x \in E; \, T(x) \leq a\}$. If there exist $d \in \mathbb{R}$, $S \subset E$ and $\alpha, \delta, \epsilon_0 > 0$ such that 
$$\beta(x):= \min\{\|z\|_{E^*}; \, z \in \partial T(x)\} \geq \alpha, \quad \forall x \in T^{-1}([d-\epsilon_0,d+\epsilon_0]) \cap S_{2\delta},$$
where $S_{2\delta}$ is a $2\delta-$neighborhood of $S$, then for $0 < \epsilon < \min\left\{\frac{\delta\alpha}{2},\epsilon_0\right\}$, there exists an homeomorphism $\eta:E \to E$ such that
\begin{itemize}
\item [$i)$] $\eta(x) = x$ for all $x \not \in  T^{-1}([d-\epsilon_0,d+\epsilon_0])\cap S_{2\delta}$;
\item [$ii)$] $\eta(T_{d+\epsilon}\cap S) \subset T_{d-\epsilon}$;
\item [$iii)$] $T(\eta(x)) \leq T(x)$, for all $x \in E$.
\end{itemize}
\label{deformationlemma}
\end{theorem}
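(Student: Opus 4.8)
The plan is to carry the classical deformation argument over to the locally Lipschitz setting, replacing the gradient flow by the flow of a suitable \emph{pseudo-gradient} field built from the Clarke subdifferential $\partial T$ (see \cite{Clarke,Chang}). For the construction, write $\Omega:=T^{-1}([d-\epsilon_0,d+\epsilon_0])\cap S_{2\delta}$, so that $\beta(x)\ge\alpha$ on $\Omega$, and fix $\alpha'\in(0,\alpha)$. I would first produce a locally Lipschitz $V\colon\Omega\to E$ with $\|V(x)\|\le1$ and $\langle z,V(x)\rangle\ge\alpha'$ for every $z\in\partial T(x)$ and every $x\in\Omega$. The crux is to find a single direction that works for the whole weak$^*$-compact convex set $\partial T(x)$. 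Fixing $x_0\in\Omega$ and applying Sion's minimax theorem to $(z,v)\mapsto\langle z,v\rangle$ on $\partial T(x_0)\times\overline{B_1(0)}$ yields $\sup_{\|v\|\le1}\min_{z\in\partial T(x_0)}\langle z,v\rangle=\min_{z\in\partial T(x_0)}\|z\|_{E^*}=\beta(x_0)\ge\alpha$, so there is $v_{x_0}$ with $\|v_{x_0}\|\le1$ and $\langle z,v_{x_0}\rangle>\tfrac{1}{2}(\alpha+\alpha')$ for all $z\in\partial T(x_0)$. Since $x\mapsto\min_{z\in\partial T(x)}\langle z,v_{x_0}\rangle=-T^{\circ}(x;-v_{x_0})$ is lower semicontinuous (the generalized directional derivative $T^{\circ}(\cdot;\cdot)$ being upper semicontinuous), this bound persists as $>\alpha'$ on a neighborhood of $x_0$; a locally finite partition of unity subordinate to such neighborhoods then glues the constant vectors $v_{x_i}$ into $V$, the two estimates surviving because $V(x)$ is a convex combination of the $v_{x_i}$.

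Next I would localize $V$. Using the $1$-Lipschitz map $x\mapsto\mathrm{dist}(x,S)$ and the function $T$, I choose Lipschitz cutoffs $g,h\colon E\to[0,1]$ with $g\equiv1$ on $T^{-1}([d-\epsilon,d+\epsilon])$ and $g\equiv0$ off $T^{-1}([d-\epsilon_0,d+\epsilon_0])$, and $h\equiv1$ on $S_\delta$, $h\equiv0$ off $S_{2\delta}$, and set $W:=-g\,h\,V$, extended by $0$ outside $\Omega$. Then $W$ is globally defined, locally Lipschitz and bounded by $1$, so the problem $\dot\sigma=W(\sigma)$, $\sigma(0)=x$, has a unique solution on all of $\mathbb{R}$, whose time-$t$ maps form a group of homeomorphisms $\eta_t$ with $\eta_t^{-1}=\eta_{-t}$; I set $\eta:=\eta_{t^*}$ with $t^*:=2\epsilon/\alpha'$. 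Property $i)$ is immediate since $W\equiv0$ off $\Omega$. For $iii)$, along any trajectory $t\mapsto T(\sigma(t))$ is locally Lipschitz, hence differentiable a.e., and the Clarke chain rule gives $\tfrac{d}{dt}T(\sigma(t))\le\max_{z\in\partial T(\sigma(t))}\langle z,W(\sigma(t))\rangle=-g\,h\,\min_{z}\langle z,V(\sigma(t))\rangle\le0$, so $T$ is non-increasing along the flow.

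For the descent estimate $ii)$, let $x\in T_{d+\epsilon}\cap S$. Because $\epsilon<\tfrac{\delta\alpha}{2}$ one has $2\epsilon/\alpha<\delta$, so I may have fixed $\alpha'$ close enough to $\alpha$ that $t^*=2\epsilon/\alpha'<\delta$. As $\|W\|\le1$, the trajectory $\sigma$ moves at speed at most $1$ and therefore stays within distance $t^*<\delta$ of $x\in S$ on $[0,t^*]$, whence $\mathrm{dist}(\sigma(t),S)<\delta$ and $h\equiv1$ there. As long as $T(\sigma(t))\ge d-\epsilon$ one also has $g\equiv1$, so $\tfrac{d}{dt}T(\sigma(t))\le-\alpha'$; integrating over $[0,t^*]$ produces a drop of at least $\alpha' t^*=2\epsilon$, forcing $T(\sigma(t^*))\le T(x)-2\epsilon\le d-\epsilon$. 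If instead $T$ had already fallen below $d-\epsilon$ at an earlier time, it stays there by the monotonicity from the previous step. Either way $\eta(T_{d+\epsilon}\cap S)\subset T_{d-\epsilon}$.

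I expect the genuine difficulties to lie in the pseudo-gradient construction: extracting one descent direction valid \emph{simultaneously} for all of $\partial T(x)$ (the minimax step combined with the weak$^*$ upper semicontinuity of $\partial T$), and justifying the Clarke chain-rule inequality for $t\mapsto T(\sigma(t))$ along the flow of a merely locally Lipschitz field. Once these are in place, the calibration of $\alpha'$ and $t^*$ against the hypothesis $\epsilon<\tfrac{\delta\alpha}{2}$ that keeps the trajectory inside $\{g=h=1\}$ until $T$ has dropped by $2\epsilon$ is a routine bookkeeping matter.
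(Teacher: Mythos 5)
Your proposal is correct and follows essentially the same route as the paper's proof: a pseudo-gradient vector field for the Clarke subdifferential (the paper simply cites Chang's Lemma 3.3, where you rederive it via Sion's minimax and upper semicontinuity of $T^{\circ}$), localized by Lipschitz cutoffs vanishing off $T^{-1}([d-\epsilon_0,d+\epsilon_0])\cap S_{2\delta}$, then flowed for a time $t^*\approx 2\epsilon/\alpha<\delta$, with the same dichotomy (either $T$ drops below $d-\epsilon$ early and stays there by monotonicity, or the trajectory remains in the region where the cutoffs equal $1$ and the derivative estimate integrates to the required drop). Your one substantive deviation---calibrating the pseudo-gradient constant $\alpha'$ close to $\alpha$ rather than using Chang's $\alpha/2$---is actually sharper: with the paper's constants ($\langle z^*,g(x)\rangle>\alpha/2$ and $t_0\in(2\epsilon/\alpha,\delta)$) the guaranteed drop is only $>\epsilon$, which from the level $d+\epsilon$ lands at $d$ rather than $d-\epsilon$, whereas your choice $\alpha' t^*=2\epsilon$ with $t^*<\delta$ closes the estimate exactly under the stated hypothesis $\epsilon<\delta\alpha/2$.
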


\begin{proof}[Proof of Theorem \ref{deformationlemma}]
To start with, under these assumptions, let us recall Lemma 3.3 of \cite{Chang}, which states the existence of a {\it psudo-gradient vector field} for $T$, given by a locally Lipschitz vector field $g: T^{-1}([d-\epsilon_0,d+\epsilon_0])\cap S_{2\delta} \to E$ satisfying
\begin{equation}
\|g(x)\| < 1
\label{vectorfield1}
\end{equation}
and 
\begin{equation}
\langle z^*,g(x)\rangle_{E^*,E} > \frac{\alpha}{2}, \quad \forall z^* \in \partial T(x).
\label{vectorfield2}
\end{equation}

For
\begin{equation}
0 < \epsilon < \min\left\{\frac{\delta\alpha}{2},\epsilon_0\right\},
\label{epsilon}
\end{equation}
define
$$A = T^{-1}([d-\epsilon_0,d+\epsilon_0]) \cap S_{2\delta},$$
$$B = T^{-1}([d-\epsilon,d+\epsilon]) \cap S_\delta$$
and note that $B \subset A$. Define 
$$\psi(x) = \frac{d(x,E\backslash A)}{d(x,E\backslash A) + d(x,B)}$$
and note that $\psi$ is a locally Lipschitz continuous function such that $0 \leq \psi \leq 1$ and
$$
\psi(x) = \left\{
\begin{array}{ll}
1 & \mbox{if $x \in B$,}\\
0 & \mbox{if $x \in E\backslash A$.}\\
\end{array} \right.
$$
Now consider $V(x) = \psi(x)g(x)$ which is also locally Lipschitz continuous and $\sigma(t,x)$ the solution of
$$
\left\{
\begin{array}{rl}
\displaystyle \frac{d}{dt}\sigma(t,x) & = -V(\sigma(t,x)), \quad t > 0,\\
\sigma(0,x) & = x,
\end{array}\right.
$$
which is continuous in $\mathbb{R}_+ \times E$.

Let us choose
\begin{equation}
t_0 \in \left(\frac{2\epsilon}{\alpha}, \delta\right)
\label{t_0}
\end{equation}
and define 
$$
\eta(x) = \sigma(t_0,x), \quad x \in E.
$$

Note that since $V \equiv 0$ in $E \backslash (T^{-1}([d-\epsilon_0,d+\epsilon_0]) \cap S_{2\delta}$, it follows that $i)$ holds.

To prove $ii)$, let us first recall Proposition 9 in \cite{Chang} which implies that $t \mapsto T(\sigma(t,x))$ is a.e. differentiable, for each $x \in E$. Moreover, we have that
\begin{equation}
\begin{array}{lll}
\displaystyle \frac{d}{dt}T(\sigma(t,x)) & \leq & \max\left\{\left< z^*,\frac{d}{dt}\sigma(t,x)\right>_{E^*,E}; \, z^* \in \partial T(\sigma(x,t))\right\}\\
& = & \displaystyle  -\min\{\left< z^*,V(\sigma(t,x))\right>; \, z^* \in \partial T(\sigma(x,t))\}\\
& \leq & \displaystyle  \left\{
\begin{array}{rl}
\displaystyle  - \frac{\alpha}{2} & \mbox{if $\sigma(t,x)\subset T^{-1}([d-\epsilon,d+\epsilon])\cap S_\delta$}\\
0 & \mbox{otherwise,}
\end{array}\right.
\end{array}
\label{estimativadeformacao}
\end{equation}
where we use (\ref{vectorfield2}) in the last estimate. Then the function $t \mapsto T(\sigma(t,x))$ is nonincreasing, for all $x \in E$ and then we get $iii)$.

Note also that, for all $t > 0$
\begin{eqnarray*}
\|\sigma(t,x) - x\| & = & \|\sigma(t,x) - \sigma(0,x)\\
& = & \left\| \int_0^t \frac{d}{ds}\sigma(s,x)ds\right\|\\
& \leq & \int_0^t \|V(\sigma(s,x))\|ds\\
& \leq & t.
\end{eqnarray*}

Let us take $x \in T_{d+\epsilon} \cap S$. If there exists some $t \in \left[0,t_0\right]$ such that $T(\sigma(t,x)) < d-\epsilon$, then $T(\sigma(t_0,x)) < d-\epsilon$ and $ii)$ is satisfied by $\eta$. Then suppose that
$$\sigma(t,x) \in T^{-1}([d-\epsilon,d+\epsilon]), \forall t \in [0,t_0]$$
and let us prove that $\sigma(t,x) \subset S_\delta$, $\forall t \in [0,t_0]$. In fact, note that
$$\|\sigma(t,x) - x\| \leq t \leq t_0 < \delta, \quad \forall t \in [0,t_0].$$

Hence, since $\sigma([0,t_0],x) \subset T^{-1}([d-\epsilon,d+\epsilon]) \cap S_\delta$, it follows by (\ref{t_0}) and (\ref{estimativadeformacao}) that

\begin{eqnarray*}
T(\eta(x))& = & T(\sigma(t_0,x))\\
& = & T(x) + \int_0^{t_0} \frac{d}{dt}T(\sigma(s,x))dx\\
& \leq & T(x) - \frac{\alpha}{2}t_0\\
& < & d -\epsilon
\end{eqnarray*}
and $ii)$ follows.
\end{proof}

Now, finally, let us proceed with the proof of Theorem \ref{mountainpass}.
\begin{proof}[Proof of Theorem \ref{mountainpass}]
First, note that since $\Phi(e) < \Phi(0) < \alpha \leq \Phi\left|_{\partial B_\rho}\right.$, then 
$$c \geq \alpha.$$

Suppose by contradiction that there exists $\epsilon > 0$, which can be assumed to satisfy 
$$c - \epsilon > \Phi(0),$$
such that for all $x \in \Phi^{-1}([c-\epsilon,c+\epsilon])$ where $c$ is defined in (\ref{minimax}), (\ref{MP2}) does not hold. Since (\ref{MP2}) is equivalent to (\ref{MP3}), this implies that for all $z \in E^*$ such that $\|z\|_* \leq \epsilon$, there exists $y_\epsilon \in X$, such that
$$
I_0(y_\epsilon) - I_0(x_\epsilon) < I'(x_\epsilon)(y_\epsilon - x_\epsilon)dx + \langle z_\epsilon,y_\epsilon-x_\epsilon\rangle_*.
$$
Hence, it follows that 
$$\beta(x) \geq \epsilon, \quad \forall x \in \Phi^{-1}([c-\epsilon,c+\epsilon]),$$
where $\beta(x) = \inf\{\|w^*\|_*; \, \, w^* \in \partial I_0(u) - I'(x)\}$.

By Theorem \ref{deformationlemma} applied to $T = \Phi$, $d=c$, $\alpha = \epsilon$ and $\epsilon_0 = \epsilon$ it follows that there exists an homeomorphism $\eta: E \to E$ and $\bar{\epsilon} \in (0,\epsilon)$ such that 
\begin{itemize}
\item [$i)$] $\eta(x) = x$ for all $x \not \in  \Phi^{-1}([c-\epsilon,c+\epsilon])$;
\item [$ii)$] $\eta(\Phi_{c+\bar{\epsilon}}) \subset \Phi_{c-\bar{\epsilon}}$.
\end{itemize}

By the definition of $c$, there exists $\gamma \in \Gamma$ such that
$$c \leq \max_{t \in [0,1]}\Phi(\gamma(t)) \leq c+\overline\epsilon.$$

Let us consider $\tilde{\gamma}(t) = \eta(\gamma(t))$ and note that, since $\Phi(0), \Phi(e) < c-\epsilon$, $i)$ implies that $\tilde{\gamma} \in \Gamma$.
Then, $ii)$ implies that
$$c \leq \max_{t\in[0,1]}\Phi(\tilde{\gamma}(t)) \leq c - \overline{\epsilon},$$
which is a contradiction. Then the result follows.

\end{proof}

\noindent \textbf{Acknowledgment.}\  

Giovany Figueiredo and Marcos Pimenta would like to thank FAPESP and CNPq for the financial support. This work was written while Giovany M. Figueiredo was as a Visiting Professor at FCT - Unesp in Presidente Prudente. He would like to thanks the warm hospitality.

\end{document}